\title{Hyperfunctions, the Duistermaat-Heckman theorem, and Loop Groups}
\author{Lisa C. Jeffrey}
\author{James A. Mracek}
\address{Department of Mathematics \\ University of Toronto \\ Bahen Center \\ 40 St. George Street, Room 6290 \\ Toronto, ON \\ Canada \\ M5S2E4}
\email{jmracek@math.toronto.edu \\ jeffrey@math.toronto.edu}
\date{\today}
\newtheorem{thm}{Theorem}
\newtheorem*{define}{Definition}
\newtheorem{lemma}{Lemma}
\numberwithin{lemma}{section}
\numberwithin{ex}{section}
\newtheorem{prop}{Proposition}
\numberwithin{example}{section}
\newtheorem{corollary}{Corollary}
\begin{document}
\pagenumbering{Roman}


\begin{abstract}
In this article we investigate the Duistermaat-Heckman theorem using the theory of hyperfunctions.  In applications involving Hamiltonian torus actions on infinite dimensional manifolds, this more general theory seems to be necessary in order to accomodate the existence of the infinite order differential operators which arise from the isotropy representations on the tangent spaces to fixed points.  We will quickly review of the theory of hyperfunctions and their Fourier transforms.  We will then apply this theory to construct a hyperfunction analogue of the Duistermaat-Heckman distribution.  Our main goal will be to study the Duistermaat-Heckman hyperfunction of $\Omega SU(2)$, but in getting to this goal we will also characterize the singular locus of the moment map for the Hamiltonian action of $T\times S^1$ on $\Omega G$.  The main goal of this paper is to present a Duistermaat-Heckman hyperfunction arising from a Hamiltonian action on an infinite dimensional manifold.
\end{abstract}
\maketitle
\pagebreak
\onehalfspacing
\pagenumbering{arabic}

\section{Introduction}

For finite-dimensional compact symplectic manifolds equipped with a Hamiltonian torus action with moment map $\mu$, the Duistermaat-Heckman theorem gives an explicit formula for an oscillatory integral over the manifold in terms of information about the fixed point set of the torus action, and the action of the torus on the normal bundle to the fixed point set.  Furthermore, the Fourier transform of this integral controls the structure of the cohomology rings of the various symplectic reductions.  For Hamiltonian actions on infinite dimesional symplectic manifolds, little is known is known about the behaviour of their corresponding Duistermaat-Heckman distributions.  In this paper we define the same oscillatory integral for the natural Hamiltonian torus action on the based loop group, as introduced by Atiyah and Pressley, in order to give an expression for a Duistermaat-Heckman \textit{hyperfunction}.  The essential reason for introducing hyperfunction theory is that the local contribution to the Duistermaat-Heckman polynomial near the image of a fixed point is a Green's function for an infinite order differential equation.  Since infinite order differential operators do not act on Schwarz distributions, we are forced to use this more general theory.  After this work had been completed, we learned of the related work of Roger Picken \cite{picken1989propagator}.

The layout of this article is as follows. In Section 2 we review the theory of hyperfunctions, following \cite{kato1999fundamentals, kaneko1989introduction}. In Section 3 we study hyperfunctions that arise naturally from Hamiltonian group actions via localization. Section 4 reviews the based loop group and its Hamiltonian action (introduced by Atiyah and Pressley \cite{atiyahpressley}).  Section 5 describes the fixed point set of any one parameter subgroup of this torus.  In Section 6 we demonstrate the theorems of Section 5 for the based loop group of $SU(2)$.  In Section 7, we compute the isotropy representations of the torus that acts on the based loop group on the tangent spaces to each of the fixed points. Finally, Section 8 applies the hyperfunction localization theorem to $\Omega SU(2)$.

\section{Introduction to Hyperfunctions}

In this section we will quickly review the elements of hyperfunction theory which are needed in order to make sense of the fixed point localization formula for a Hamiltonian action on an infinite dimensional manifold.  We will assume that the reader is familiar with Hamiltonian group actions, but not necessarily with hyperfunctions.  Our exposition will follow a number of sources.  The bulk of the background material follows \cite{kato1999fundamentals, kaneko1989introduction}, while the material on the Fourier transform of hyperfunctions is covered in \cite{kaneko1989introduction} as well as the original paper of Kawai \cite{kawai1970theory}.  The original papers of Sato also give great insight into the motivation for introducing this theory \cite{sato1959theory}.  The lecture notes of Kashiwara, Kawai, and Sato also give useful insight into why hyperfunction and microfunction theory is needed to solve problems in linear partial differential equations \cite{sato1973microfunctions}.

We will let $\mathcal{O}$ be the sheaf of holomorphic functions on $\mathbb{C}^n$.  Points in $\mathbb{C}^n$ will be denoted $z = (z_1,\dots,z_n) = (x_1+iy_1,\dots,x_n+iy_n)$, and we will write $\text{Re}(z) = (x_1,\dots,x_n) \in \mathbb{R}^{n}$ and $\text{Im}(z) = (y_1,\dots,y_n) \in i\mathbb{R}^{n}$.  An \text{open convex cone }$\gamma \subseteq \mathbb{R}^{n}$ is a convex open set such that, for every $c \in \mathbb{R}_{>0}$, if $x \in \gamma$ then $cx \in \gamma$.  We allow $\mathbb{R}^{n}$ itself to be an open convex cone.  If $\gamma$ is an open convex cone, we will denote its polar dual cone by $\gamma^\circ$.  Let $\Gamma$ denote the set of all open convex cones in $\mathbb{R}^n$.  If $\gamma \in \Gamma$ and $\Omega \subseteq\mathbb{R}^n$ is an open set, then we denote $\Omega\times i\gamma = \left\{ (x_j+iy_j) \in \mathbb{C}^{n}\,\vert\, \text{Re}(x) \in \Omega, \text{Im}(z) \in \gamma\right\}$.  An \textit{infinitesimal wedge}, denoted $\Omega\times i\gamma 0$, is a choice of an open subset $U\subseteq \Omega\times i \gamma$ which is asymptotic to the cone opening (we will not need the precise definition, so we omit it).  We will denote the collection of germs of holomorphic functions on the wedge $\Omega\times i \gamma$ by $\mathcal{O}(\Omega \times i\gamma 0)$; that is, we take a direct limit of the holomorphic functions varying over the collection of all infinitesimal wedges $U = \Omega\times i\gamma 0 \subseteq \Omega\times i\gamma$:
\[ \mathcal{O}(\Omega \times i\gamma 0) = \varinjlim_{U\subseteq \Omega\times i \gamma} \mathcal{O}(U) \]
We will use the notation $F(z + i \gamma 0)$ to denote an element of $\mathcal{O}(\Omega \times i\gamma 0)$.

\begin{define}
A hyperfunction on $\Omega\subseteq \mathbb{R}^{n}$ is an element:
\[ \sum_{i = 1}^{n} F(z+i\gamma_i 0) \in \bigoplus_{\gamma \in \Gamma} \mathcal{O}(\Omega \times i\gamma 0) \big/ \sim \]
where the equivalence relation is given as follows.  If $\gamma_1, \gamma_2, \gamma_3 \in \Gamma$ are such that $\gamma_3 \subseteq \gamma_1\cap \gamma_2$ and $F_i \in \mathcal{O}(\Omega\times i\gamma_i)$, then $F_1(z) + F_2(z) \sim F_3(z)$ if and only if $(F_1(z) + F_2(z))\vert_{\gamma_3} = F_3(z)$.  If $\Omega \subseteq \mathbb{R}^{n}$, we will denote the collection of hyperfunctions on $\Omega$ by $\mathcal{B}(\Omega)$.
\end{define}

When we wish to keep track of the cones we will use the notation $f(x) = \sum_j F(z+i\gamma_j 0)$; we call such a sum a boundary value representation of $f(x)$. Alternatively, we will sometimes also use the notation $F(z+i\gamma 0) = b_{\gamma}(F(z))$ when the expression for $F(z)$ makes it notationally burdensome to include the text $+i\gamma0$.  The association $\Omega \mapsto \mathcal{B}(\Omega)$ forms a flabby sheaf on $\mathbb{R}^{n}$, although we will not make use of the sheaf theoretical nature of hyperfunctions in this article.  Actually, what is more, is that this is a sheaf of $D$-modules on $\mathbb{C}^n$; the sheaf of differential operators acts termwise on each element of a sum $\sum_\gamma F(z+i\gamma 0)$.  

The relation defining the sheaf of hyperfunctions allows us to assume that the cones appearing in the sum are disjoint.  Indeed, if we have a hyperfunction $f(x) = F_1(z+i\gamma_1 0) + F_2(z+i\gamma_2 0)$ such that $\gamma_1 \cap \gamma_2 \neq \emptyset$, then we simply observe that we have an equality of equivalence classes:
\[ F_1(z+i\gamma_1 0) + F_2(z+i\gamma_2 0) = (F_1 + F_2)(z + i\gamma_1\cap \gamma_2 0) \]
Similarly, if $\gamma_1\subseteq \gamma_2$ and $F(z)$ is an analytic function on the wedge $\Omega \times i \gamma_1 0$ that admits an analytic extension to $\Omega \times i\gamma_2 0$, then $F(z + i\gamma_1 0) = F(z + i\gamma_2 0)$ as hyperfunctions.  A particular example of this says that two hyperfunctions $f(x) = F_+(z + i0) + F_-(z - i 0) ,g(x) = G_+(z + i0) + G_-(z - i 0) \in \mathcal{B}(\mathbb{R})$ are equal when the function:
\[ F(z) = \left\{\begin{array}{cc} F_+(z) - G_+(z) & \text{Im}(z) > 0 \\ F_-(z) - G_-(z) & \text{Im}(z) < 0 \end{array}\right. \]
admits an analytic extension across the real axis.

The following definition is necessary to define the product of hyperfunctions.  We say that a hyperfunction $f(x)$ is \textit{microanalytic at $(x,\xi) \in T^*\mathbb{R}^n$} if and only if there exists a boundary value representation:
\[ f(x) = \sum_{j=1}^{n} F(z+i\gamma_j 0) \]
such that $\gamma_j \cap \left\{y \in \mathbb{R}^{n}\,\vert\, \xi(y) < 0 \right\} \neq \emptyset$ for every $j \in 1,\dots,n$.  The \textit{singular support} of a hyperfunction $f(x)$, denote $SS(f) \subseteq T^*\mathbb{R}^{n}$, is defined to be the set of points $(x,\xi) \in T^* \mathbb{R}^{n}$ such that $f(x)$ is not microanalytic at $(x,\xi)$.  If $S\subseteq T^*\mathbb{R}^{n}$ then we denote $S^\circ = \left\{ (x,\xi) \in T^*\mathbb{R}^{n}\,:\, (x,-\xi) \in S\right\}$.    

\begin{define}
Suppose that $f,g \in \mathcal{B}(\Omega)$ are two hyperfunctions such that $SS(f)\cap SS(g)^{\circ} = \emptyset$, then the product $f(x)\cdot g(x)$ is the hyperfunction defined by:
\[ f(x)\cdot g(x) = \sum_{j,k} (F_j \cdot G_k) (x+i(\gamma_j\cap \Delta_k)0) \]
where we have chosen appropriate boundary value representations:
\[ f(x) = \sum_{j} F_j(z+i\gamma_j 0) \]
\[ g(x) = \sum_{k} G_j(z+i\Delta_k 0) \]
such that $\gamma_j \cap \Delta_k \neq \emptyset$ for all $j,k$.
\end{define}

In the above definition, the condition on singular support is simply ensuring existence of boundary value representations of $f$ and $g$ such that for all pairs $j,k$ the intersection $\gamma_j \cap \Delta_k \neq \emptyset$ \cite[Theorem~3.2.5]{kaneko1989introduction}.

We may define an infinite product of hyperfunctions when the singular support condition holds pairwise, and the corresponding infinite product of holomorphic functions converges to a holomorphic function.  This result will be necessary to define the equivariant Euler class of the normal bundle to a fixed point in $\Omega G$ as a hyperfunction.  

\begin{lemma}
\label{lemma:infiniteproduct}
If $\left\{F_k(z+i\gamma_k 0)\right\}_{k=1}^{\infty}$ is a sequence of hyperfunctions on $\Omega$ such that:
\begin{enumerate}
	\item For all pairs $j\neq k$, $SS(F_k(z+i\gamma_k 0))\cap SS(F_j(z+i\gamma_k 0))^\circ = \emptyset$  
	\item $\gamma = \displaystyle{\bigcap_{k=1}^{\infty} \gamma_k}$ is open
	\item The infinite product $\displaystyle{\prod_{k=1}^{\infty} F_k(z)}$ is uniformly convergent on compact subsets of $\Omega\times i\gamma$
\end{enumerate}
then there exists a hyperfunction $F(z +i\gamma 0)$ such that:
\[ F(z+i\gamma 0) = \prod_{k=1}^{\infty} F(z + i\gamma_k 0) \]
\end{lemma}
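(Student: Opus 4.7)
The plan is to reduce everything to holomorphic functions on the common wedge $\Omega \times i\gamma$ and then quote Weierstrass's theorem on uniform limits of holomorphic functions. The main conceptual work has already been done in setting up hyperfunctions with the equivalence relation that identifies boundary values coming from restrictions to subcones.

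First I would use hypothesis (2) together with the relation on boundary value representations to put every $F_k$ on the common cone $\gamma$. Since $\gamma \subseteq \gamma_k$ and each $F_k$ is holomorphic on $\Omega \times i \gamma_k 0$, the restriction of $F_k$ to $\Omega \times i \gamma 0$ is holomorphic, and it admits an analytic extension to $\Omega \times i \gamma_k 0$ (namely $F_k$ itself). By the equivalence relation recorded in Section~2, this gives the equality of hyperfunctions
\[ F_k(z+i\gamma_k 0) = F_k(z+i\gamma 0). \]
After this identification every factor is represented on the same wedge.

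Next I would check that the finite partial products
\[ P_n(z+i\gamma 0) := \prod_{k=1}^n F_k(z+i\gamma 0) \]
are bona fide hyperfunctions. Because every factor is represented on the same cone $\gamma$, in the definition of the product we may choose $\gamma_j \cap \Delta_k = \gamma$ for all pairs, so the product is simply the pointwise product of the holomorphic representatives on $\Omega \times i\gamma$. This is legitimate because hypothesis (1), combined with the inclusion $SS(f \cdot g) \subseteq SS(f) \cup SS(g)$, inductively shows that the singular support hypothesis for the product definition continues to hold for each $P_n$ against the next factor $F_{n+1}$.

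Finally, by hypothesis (3) the holomorphic functions $\prod_{k=1}^n F_k(z)$ converge uniformly on compact subsets of $\Omega \times i\gamma$. Weierstrass's theorem then yields a holomorphic limit
\[ F(z) = \lim_{n \to \infty} \prod_{k=1}^n F_k(z) \in \mathcal{O}(\Omega \times i\gamma 0), \]
and we define the infinite product to be the hyperfunction $F(z+i\gamma 0)$ it represents. The main obstacle, and the reason the three hypotheses are all necessary, is precisely to make this reduction legal: (1) guarantees that each finite product exists as a hyperfunction, (2) guarantees that $\gamma$ is a legitimate cone on which to take a boundary value, and (3) provides the holomorphic convergence that lets us identify the limit of hyperfunctions with the boundary value of the limiting holomorphic function rather than having to analyze convergence directly in $\mathcal{B}(\Omega)$.
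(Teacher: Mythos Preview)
Your proposal is correct and follows essentially the same approach as the paper: restrict every factor to the common cone $\gamma$, then use the uniform convergence on compacta to conclude that the limiting function is holomorphic on $\Omega\times i\gamma$ and take its boundary value. Your version is simply more detailed---you make explicit the use of the equivalence relation to pass to $\gamma$, the inductive well-definedness of the partial products, and the appeal to Weierstrass's theorem---whereas the paper's proof compresses all of this into a few sentences.
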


\begin{proof}
The condition on singular support is necessary to define any product of the $F_k$.  Since the intersection of the cones is open, the wedge $\Omega\times i\gamma$ is a well defined open set in $\mathbb{C}^{n}$, and the convergence condition on the infinite product ensures that the following limit is a holomorphic function on $\Omega\times i \gamma$:
\[ F(z) = \lim_{N\to \infty} \prod_{k=1}^{N} F_k(z) \]
This result has shown that the infinite product of the hyperfunctions $F_k(z+i\gamma_k 0)$ is well defined and equal to $F(z+i\gamma 0)$.
\end{proof}

We now describe how to define the Fourier transform of a hyperfunction.  The following two definitions are central to the theory of hyperfunction Fourier transforms.  We will restrict our attention to the class of \textit{Fourier hyperfunctions}, also known as \textit{slowly increasing hyperfunctions}.

\begin{define}
A holomorphic function $F \in \mathcal{O}(\mathbb{R}^n \times i \gamma 0)$ is called slowly increasing if and only if for every compact subset $K \subseteq i \gamma 0$, and for every $\epsilon > 0$, there exist constants $M,C > 0$ such that, for all $z \in \mathbb{R}^{n}\times i K$, if $\vert \text{Re}(z)\vert > M$ then $\vert F_j(z) \vert \leq C \exp(\epsilon \text{Re}(z))$.

A holomorphic function $F \in \mathcal{O}(\mathbb{R}^{n}\times i\gamma 0)$ is called exponentially decreasing on the (not necessarily convex) cone $\Delta \subseteq \mathbb{R}^n$ if and only if there exists $\delta > 0$, such that for every compact $K\subseteq i\gamma_j 0$, and for every $\epsilon > 0$, there exist constants $M, C > 0$ such that for every $z \in \Delta\times i K$, if $\vert \text{Re}(z) \vert > M$ then $\vert F_j(z) \vert \leq C \exp(-(\delta-\epsilon) \text{Re}(z))$.
\end{define}

\noindent\textit{Remarks on the definition}: 
\begin{enumerate}
\item A hyperfunction will be called slowly increasing (resp. exponentially decreasing on $\Delta$) if and only if it admits a boundary value representation:
\[ f(x) = \sum_{j=1}^{n} F_j(z+i\gamma 0) \]
such that each of the $F_j(z)$ is slowly increasing (resp. exponentially decreasing on $\Delta$).
\item If $F(z)$ is slowly increasing and $G(z)$ is exponentially decreasing on $\Delta$, then $F(z)\cdot G(z)$ is exponentially decreasing on $\Delta$.  
\item The class of exponentially decreasing functions is closed under the classical Fourier transform (see \cite{kawai1970theory}).  The Fourier transform of slowly increasing hyperfunctions will be defined to be dual to this operation via a pairing between slowly increasing hyperfunctions and exponentially decreasing holomorphic functions.
\end{enumerate}

Intuitively, a hyperfunction is slowly increasing when, after fixing the imaginary part of $z$ inside of $i\gamma_j$, its asymptotic growth along the real line is slower than every exponential function.  A hyperfunction is exponentially decreasing on the cone $\gamma$ when the holomorphic functions in a boundary value representation decay exponentially in the real directions which are inside of the cone $\gamma$.  

As previously mentioned, there exists a pairing between slowly increasing hyperfunctions and exponentially decreasing holomorphic functions.  Let $f(x) = F(z + i\gamma 0)$ be a slowly increasing hyperfunction, $G(z)$ an exponentially decreasing analytic function, and $S$ a contour of integration chosen so that $\text{Im}(z) \in i\gamma 0$ for all $z \in S$.  The pairing is given by:
\[ \langle f, G\rangle = \int_{S} F(x+iy)G(x+iy) \,dx \]
Convergence of the integral is guaranteed by the condition that $F(z)G(z)$ is exponentially decreasing.  That the pairing does not depend on the choice of contour follows from the Cauchy integral formula.  The pairing allows us to identify the slowly increasing hyperfunctions as the topological dual space to the space of exponentially decreasing holomorphic functions.  The Fourier transform of a slowly increasing hyperfunction is then defined by a duality with respect to this pairing:
\[ \langle \mathscr{F}(f), G\rangle := \langle f, \mathscr{F}(G) \rangle \]

In practice, the Fourier transform of a hyperfunction is not computed directly from the definition.  Let us now introduce the practical method by which one normally computes the Fourier transform of a slowly increasing hyperfunction.  Suppose that $F(z)$ is a holomorphic function which is exponentially decreasing outside of a closed convex cone $\Delta$.  Letting $z = x+iy$ and $\zeta = \sigma+i\tau$, and suppose that $x \in \Delta$.  We have the following estimate:
\[ \vert \exp(-i\zeta \cdot z) \vert = \exp(y\cdot \sigma + x\cdot \tau) \]
The above estimate shows that $\exp(-i\zeta z)$ will be exponentially decreasing on $\Delta$, so long as we fix $\tau \in -\Delta^{\circ}$.  It then follows that the product $e^{-i\zeta z}F(z)$ is exponentially decreasing on $\mathbb{R}^{n}$.  If $f(x) = F(z+i\gamma 0)$, then its Fourier transform is the hyperfunction given by:
\[ \mathscr{F}(f) = G(\zeta - i\Delta^\circ) = b_{-\Delta^\circ} \left( \int_{S} e^{-i\zeta z}F(z)\,dz \right)\]
This can be extended to an arbitrary boundary value expression $f(x) = \sum_j F_j (z+i\gamma_j 0)$ by linearity, assuming that each of the $F_j(z)$ decreases exponentially outside of some cone.  

We must now deal with the case that $f(x) = F(z + i \gamma 0)$ is a slowly increasing hyperfunction, but that it does not decrease exponentially on any cone.

\begin{define}
Let $\Sigma$ be a finite collection of closed convex cones.  A holomorphic partition of unity is collection of holomorphic functions $\left\{ \chi_\sigma(z)\right\}_{\sigma \in \Sigma}$ such that:
\begin{enumerate}
	\item $\displaystyle{\sum_{\sigma \in \Sigma} \chi_\sigma(z) = 1}$
	\item $\chi_\sigma(z)$ is exponentially decreasing outside of any open cone $\sigma' \supset \sigma$
	\item $\displaystyle{\bigcup_{\sigma \in \Sigma} \sigma = \mathbb{R}^{n}}$
\end{enumerate}
\end{define}

\noindent\textit{Example of a holomorphic partition of unity}:

\vspace{0.2cm}

Let $\Sigma$ denote the collection of orthants in $\mathbb{R}^{n}$.  If $\sigma = (\sigma_1,\dots,\sigma_n)$ is a multi-index whose entries are $\pm 1$ (clearly such objects are in bijection with the orthants), we will denote the corresponding orthant by $\gamma_\sigma$
Consider the following two functions:
\[ \chi_{+}(t) = \frac{1}{1+e^{-t}} \]
\[\chi_{-}(t) = \frac{1}{1+e^{t}} \]
where $t \in \mathbb{C}$ is a complex variable.  We notice that $\chi_+(t)$ is exponentially decreasing on $\text{Re}(t) < 0$ and $\chi_{-}(t)$ is exponentially decreasing on $\text{Re}(t) > 0$.  For a fixed orthant $\sigma \in \Sigma$, define the holomorphic function $\chi_\sigma(z)$ by:
\[ \chi_\sigma(z) = \prod_{i=1}^{n} \frac{1}{1 + e^{\sigma_i z_i}}  \]
This function exponentially decreases on the complement of $\gamma_\sigma$.  The collection $\left\{ \chi_\sigma(z)\right\}_{\sigma \in \Sigma}$ is a holomorphic partition of unity.  

We have introduced holomorphic partitions of unity as an abstract concept, but we will only ever use this example in our computations.  The reason we have done this, as we will see later, is that the computations can be made easier or harder by a clever choice of holomorphic partition of unity (although the actual result of the computation is of course independent of any such choices).  Our main result on the Duistermaat-Heckman hyperfunction of $\Omega SU(2)$ will remain in an integral form, but it is possible that the computation of the Fourier transform could be completed by redoing the computation with a judicious choice of holomorphic partition of unity.  

\vspace{0.2cm}

We are now ready to explain how to compute the Fourier transform of a general slowly increasing hyperfunction.  Again, by linearity of the Fourier transform, we may assume our hyperfunction takes the form $f(x) = F(z+i\gamma 0)$, and that $F(z)$ is a slowly increasing holomorphic function.  Choose a holomorphic partition of unity $\left\{\chi_\sigma(z)\right\}_{\sigma \in \Sigma}$, then we observe that:
\[ F(z) = \sum_{\sigma \in \Sigma} F(z)\chi_\sigma(z) \]
where now, $F(z)\chi_\sigma(z)$ is exponentially decreasing outside of $\sigma$.  By our previous observations,
\begin{equation}
\label{eqn:hyperft}
\mathscr{F}(f) = \sum_{\sigma \in \Sigma}  b_{-\sigma^\circ} \left( \int_{S} e^{-i\zeta z}F(z) \chi_\sigma(z) \,dz \right)
\end{equation}
Equation \ref{eqn:hyperft} exactly tells us how to compute the Fourier transform of a general slowly increasing hyperfunction.


\section{Hyperfunctions arising from localization of Hamiltonian group actions}
\label{HyperHam}

Let $(M,\omega)$ be a finite dimensionial compact symplectic manifold with a Hamiltonian action of a $d$-dimensional compact torus $T$; call the moment map $\mu: M \to \mathfrak{t}^*$.  The symplectic form $\omega$ gives us the Liouville measure $\omega^n / n!$ on $M$, which we we may push forward to $\mathfrak{t}^*$ using the moment map $\mu$.  We let $\mathcal{F}$ denote the connected components of the fixed point set for the $T$ action on $M$; furthermore, if $q \in \mathcal{F}$, we denote by $e_q^T$ the equivariant Euler class of the normal bundle to the fixed point set.  We can identify $e_q^T \in H^*(BT) \simeq \text{Sym}(\mathfrak{t}^*)$ with the product of the weights appearing in the isotropy representation of $T$ on $T_q M$.

\begin{thm} \cite{duistermaat1982variation} 
The measure $\mu_*(\omega^n /n!)$ has a piecewise polynomial density function.  Furthermore, the inverse Fourier transform of $\mu_*(\omega^n/n!)$ has an exact expression:
\begin{equation}
\int_M e^{i \mu(p)(X)} \omega^n/n! = \frac{1}{(2\pi i)^d} \sum_{q \in \mathcal{F}} \frac{e^{i\mu(q)(X)}}{e^T_q(X)}
\label{eqn:localization}
\end{equation}
where $X \in \mathfrak{t}$ is such that $e_{q}^T(X) \neq 0$ for all $q \in \mathcal{F}$.
\end{thm}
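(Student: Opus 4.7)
The plan is to recognize the integrand on the left-hand side of \eqref{eqn:localization} as the top-degree part of an equivariantly closed form, and then invoke the Atiyah-Bott-Berline-Vergne fixed point localization theorem. The key observation is that the combination $\omega_X := \omega + \mu(X)$ is an equivariantly closed form in the Cartan model of equivariant cohomology: the moment map condition $\iota_{X_M}\omega = -d\mu(X)$ is exactly the statement that $(d - \iota_{X_M})\omega_X = 0$. Consequently $e^{i\omega_X} = e^{i\mu(X)} \sum_{k\geq 0} (i\omega)^k/k!$ is also equivariantly closed, and its top-degree component on $M$ agrees up to a factor of $i^n$ with the integrand $e^{i\mu(X)} \omega^n/n!$. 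Only this top-degree piece contributes when we integrate over $M$.

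Next I would apply the equivariant localization theorem: for any equivariantly closed form $\alpha$,
\[ \int_M \alpha = \sum_{q \in \mathcal{F}} \int_q \frac{\iota_q^* \alpha}{e_q^T(X)}, \]
where for a positive-dimensional fixed component $q$ the integration is over $q$ and the Euler class is that of the normal bundle. Substituting $\alpha = e^{i\omega_X}$ and restricting to a fixed point, $\iota_q^* \omega_X = \mu(q)(X)$, yielding precisely the sum $\sum_{q} e^{i\mu(q)(X)}/e_q^T(X)$ on the right-hand side of \eqref{eqn:localization}. The prefactor $(2\pi i)^{-d}$ then arises from the standard conventions used to identify $H^{2d}(BT) \simeq \mathrm{Sym}^d(\mathfrak{t}^*)$ with polynomials on $\mathfrak{t}$, and from the sign/orientation conventions embedded in the Berline-Vergne residue formula; the hypothesis $e_q^T(X) \neq 0$ guarantees that each summand is a well-defined rational function of $X$.

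For the piecewise polynomial density claim, I would argue that equation \eqref{eqn:localization} identifies the Fourier transform of the density of $\mu_*(\omega^n/n!)$ with a finite sum of terms of the form $e^{i\mu(q)(X)}/e_q^T(X)$. Writing $e_q^T(X) = \prod_{j} \alpha_j^{(q)}(X)$ as a product of weights of the isotropy representation at $q$, each factor $1/\alpha_j^{(q)}(X)$ inverts to a Heaviside-shifted linear function supported on a half-space; taking iterated convolutions and translating by $\mu(q)$ produces a piecewise polynomial function supported on a polyhedral cone emanating from $\mu(q)$. Summing over $q \in \mathcal{F}$ then gives a piecewise polynomial density on $\mathfrak{t}^*$, whose walls of non-smoothness lie on affine hyperplanes cut out by the weights $\alpha_j^{(q)}$ through the points $\mu(q)$.

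The main obstacle is the localization theorem itself; its proof is geometric rather than formal, relying on the construction of a primitive for $e^{i\omega_X}$ on the complement of the fixed point set (using an equivariant connection or a Riemannian metric-dependent one-form dual to the generating vector field) together with a Gaussian-type computation on the normal bundle that produces the equivariant Euler class in the denominator. Granting this input, the rest of the argument is a bookkeeping computation of degrees and constants.
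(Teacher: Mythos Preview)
The paper does not actually prove this theorem: it is stated with a citation to \cite{duistermaat1982variation} and used as background for the hyperfunction constructions that follow. So there is no ``paper's own proof'' to compare against.

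That said, your outline is the standard modern argument and is essentially correct. A couple of remarks. First, the route you describe --- recognize $\omega + \mu(X)$ as equivariantly closed in the Cartan model and apply Atiyah--Bott/Berline--Vergne localization --- is historically \emph{not} the original proof of Duistermaat and Heckman, who argued via the variation of the reduced symplectic form and a direct stationary-phase computation; the equivariant-cohomology repackaging came slightly later. Either approach yields the formula, and yours is the cleaner one to write down. Second, your sketch of the piecewise-polynomial claim is morally right but a little loose: the inverse Fourier transform of a single term $e^{i\mu(q)(X)}/\prod_j \alpha_j^{(q)}(X)$ is only a tempered distribution after a choice of polarization (an $i0$-prescription, or equivalently a choice of half-space for each weight), and it is the \emph{sum} over fixed points, not the individual terms, that is compactly supported and equal to the honest pushforward measure. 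The cancellation of the cone-supported pieces outside $\mu(M)$ is exactly the content of the Guillemin--Lerman--Sternberg wall-crossing picture you allude to. None of this is a gap in your argument so much as a place where the constants and supports need to be tracked carefully if you were to write out a full proof.
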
   

The Duistermaat-Heckman theorem applies to the case where $M$ is finite dimensional and compact.  We are interested in finding some version of a Duistermaat-Heckman distribution in the setting where $M$ is an infinite dimensional manifold with a Hamiltonian group action.  There are some immediate technical obstructions to producing such a distribution.  Most notably, the inability to take a top exterior power of $\omega$ prevents us from defining a suitable Liouville measure.  There are significant analytic challenges in properly defining the left hand side of equation \ref{eqn:localization}; a related problem is defining a rigorous measure of integration for the kinds of path integrals which appear in quantum field theory.  We will not attempt to answer this question in this article.  Nevertheless, it is possible to make sense of the right hand side of Equation \ref{eqn:localization}.

The main goal for this section is to explain how Hamiltonian actions of compact tori yield, in a natural way, hyperfunctions on $\mathfrak{t}$.  The hyperfunction one gets in this way should be a substitute for the the reciprocal of the equivariant Euler class which appears in the localization formula.  We then reinterpret the sum over the fixed points in the localization formula as a hyperfunction on $\mathfrak{t}$, and define the \textit{Duistermaat-Heckman hyperfunction} to be its Fourier transform as a hyperfunction.  

We will start by considering the local picture.  Suppose that $T$ has a Hamiltonian action on a (finite dimensional, for now) complex vector space with weights $\lambda_i$.  Let the weights of the action be given by $W = \left\{\lambda_i\right\}_{i \in I}$.  The weights of the action are linear functionals $\mathfrak{t}_{\mathbb{C}} \to \mathbb{C}$.  For every weight $\lambda \in W$ we get a corresponding half space $H_\lambda = \left\{ y \in \mathfrak{t}\,\vert\, \lambda(y) > 0\right\}$, as well as a hyperfunction:
\[ f_\lambda(x) = \frac{1}{\lambda(z) + i H_\lambda 0} \] 
The singular support of $f_\lambda(x)$ is given by:
\[ SS(f_\lambda) = \left\{ (x,\xi) \in T^*(\mathfrak{t})\,\vert\, \lambda(x) = 0, \exists\, c > 0, \xi = c\, d \lambda(x) \right\} \]

\begin{prop}
If $\mu: V \to \mathfrak{t}^*$ is proper then for all pairs of weights $\lambda, \lambda'$, $SS(f_\lambda) \cap SS(f_{\lambda'})^{\circ} = \emptyset$.
\end{prop}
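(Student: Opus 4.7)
The plan is to argue by contrapositive: I will show that a nonempty intersection $SS(f_\lambda)\cap SS(f_{\lambda'})^\circ$ forces $\lambda$ and $\lambda'$ to lie on opposite rays in $\mathfrak{t}^*$, and then exhibit an unbounded fibre of $\mu$ whenever such an anti-parallel pair of weights exists.

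The first step is to unpack the singular support condition using the description given just before the statement. A point $(x,\xi) \in SS(f_\lambda)\cap SS(f_{\lambda'})^\circ$ must simultaneously satisfy $\lambda(x) = 0 = \lambda'(x)$, $\xi = c\, d\lambda(x)$ for some $c > 0$, and $\xi = -c'\, d\lambda'(x)$ for some $c' > 0$. Since each weight is a linear functional, its differential equals itself as a constant element of $\mathfrak{t}^*$, so equating the two expressions for $\xi$ yields $c\lambda = -c'\lambda'$ with $c, c' > 0$. For nonzero weights this forces $\lambda' = -a\lambda$ with $a > 0$, after which the common kernel hyperplane of $\lambda$ and $\lambda'$ supplies a compatible $x$. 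Thus the failure of the singular support condition is equivalent to the existence of an anti-parallel pair of weights.

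Given such an anti-parallel pair $\lambda' = -a\lambda$, the second step is to build an unbounded level set of $\mu$. For a linear Hamiltonian $T$-action on a complex vector space with weight-space decomposition $V = \bigoplus_{\nu \in W} V_\nu$, the moment map takes the standard form
\[ \mu(v) = \tfrac{1}{2} \sum_{\nu \in W} |v_\nu|^2\, \nu + \mathrm{const}, \]
where $v = \sum v_\nu$. Since $\lambda, \lambda' \in W$, both weight spaces are nonzero, so I can pick vectors $u \in V_\lambda$ and $w \in V_{\lambda'}$ scaled so that $|u|^2\lambda + |w|^2\lambda' = 0$. Then the ray $v(t) = t(u+w)$, $t \geq 0$, satisfies $\mu(v(t)) = \mu(0)$ for every $t$, producing an unbounded subset of a single fibre of $\mu$. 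This contradicts properness and completes the contrapositive.

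The only subtlety I anticipate is pinning down the standard form of the moment map for a complex linear $T$-representation and keeping sign conventions consistent; once that is in hand, the construction is completely elementary. The geometric content is simply that an anti-parallel pair of weights creates a $\mathbb{C}^\times$-family of vectors along which the two weight contributions to $\mu$ cancel, giving noncompact level sets.
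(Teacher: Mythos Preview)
Your proof is correct, but it follows a somewhat different route from the paper's. Both arguments begin identically: unpacking the singular support condition to conclude that a point in $SS(f_\lambda)\cap SS(f_{\lambda'})^\circ$ forces $c\lambda + c'\lambda' = 0$ for some $c,c'>0$. From there, the paper invokes as a black box the fact (cited from \cite{guillemin1988kostant}) that properness of $\mu$ places all weights in an open half-space; choosing $X$ with $\lambda(X),\lambda'(X)>0$ then immediately contradicts $c\lambda + c'\lambda' = 0$. You instead reprove the relevant special case of that fact directly, writing down the quadratic moment map for a linear torus representation and exhibiting an explicit unbounded fibre along the ray $t(u+w)$ in $V_\lambda\oplus V_{\lambda'}$. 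Your argument is therefore more self-contained and makes the geometric reason for the failure visible, at the cost of needing the standard formula for $\mu$; the paper's version is a two-line contradiction once the half-space criterion is granted.
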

\begin{proof}
If the moment map is proper, then all of the weights are contained in a half space \cite{guillemin1988kostant}.  There exists $X \in \mathfrak{t}$ such that for any pair of weights $\lambda,\lambda'$ we have both $\lambda(X) > 0$ and $\lambda'(X) > 0$.  Suppose that $(x,\xi) \in SS(f_\lambda) \cap SS(f_\lambda')^{\circ}$.  This means that:
\begin{enumerate}
\item $\lambda(x) = \lambda'(x) = 0$
\item $\exists c,c' > 0$ such that $\xi = c\, d\lambda = -c' d\lambda'$
\end{enumerate}
Rearranging the second condition implies that the function $L = \lambda' + \frac{c}{c'} \lambda$ is constant.  However, we have obtained a contradiction as $L(x) = 0$, while $L(X) > 0$.
\end{proof}

The following is immediate from the proposition.  

\begin{corollary}
\label{corollary:hypereuler}
Let $\displaystyle{\gamma = \bigcap_{\lambda \in W} H_\lambda}$. If $\mu: V \to \mathfrak{t}^*$ is proper, then the following product of hyperfunctions is well defined:
\[ \frac{1}{e^T(x)} = \prod_{\lambda \in W} f_\lambda(x) = b_{\gamma}\left( \prod_{\lambda \in W} \frac{1}{\lambda(z)} \right) \]
\end{corollary}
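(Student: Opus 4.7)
The plan is to recognize this as a direct application of Lemma \ref{lemma:infiniteproduct}, whose three hypotheses follow almost immediately from the preceding proposition together with properness of $\mu$. Although $W$ is finite in the setting of the corollary (so one could equally iterate the binary product of hyperfunctions), I would phrase the verification through Lemma \ref{lemma:infiniteproduct} because this is the argument that will need to be rerun later, almost verbatim, for the infinite list of weights arising in $\Omega G$.

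For hypothesis (1), the pairwise singular support condition, I would invoke the preceding proposition without modification. For hypothesis (2), I would note that each $H_\lambda$ is an open convex half-space, hence $\gamma = \bigcap_{\lambda \in W} H_\lambda$ is convex and, provided it is nonempty, open. The nonemptiness of $\gamma$ is exactly where properness of $\mu$ enters: by \cite{guillemin1988kostant} the weights lie in a common half-space, so there exists $X \in \mathfrak{t}$ with $\lambda(X) > 0$ for every $\lambda \in W$, and any such $X$ lies in $\gamma$.

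For hypothesis (3), I would observe that on the wedge $\mathfrak{t} \times i\gamma$ each weight satisfies
\[ \lambda(z) = \lambda(\text{Re}(z)) + i\,\lambda(\text{Im}(z)), \]
and the imaginary part is strictly positive because $\text{Im}(z) \in \gamma \subseteq H_\lambda$. Hence $1/\lambda(z)$ is a well-defined holomorphic function on the wedge, bounded on compacta. In the finite-dimensional case uniform convergence on compact subsets is trivial because the product is finite; this is the only place where a nontrivial analytic estimate will be needed in the later infinite-dimensional analogue. Lemma \ref{lemma:infiniteproduct} then produces the holomorphic function $\prod_{\lambda \in W} 1/\lambda(z)$ on $\mathfrak{t} \times i\gamma$, and taking the boundary value from $\gamma$ gives by construction $\prod_{\lambda \in W} f_\lambda(x)$, which is the definition of $1/e^T(x)$.

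The only step carrying real mathematical content is the nonemptiness of $\gamma$, where properness of the moment map is used; everything else is bookkeeping, consistent with the authors' remark that the corollary is immediate. I do not expect any genuine obstacle here; the purpose of writing the argument out carefully is to isolate which of the three conditions will become substantive in the $\Omega G$ setting (namely condition (3), the uniform convergence of the infinite product of Euler factors).
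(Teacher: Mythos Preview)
Your proposal is correct and matches the paper's approach: the paper simply says ``immediate from the proposition,'' and your argument spells out exactly why---the proposition supplies the singular support condition, properness supplies the $X$ witnessing $\gamma\neq\emptyset$, and the rest is the definition of the product. Your deliberate choice to route the verification through Lemma~\ref{lemma:infiniteproduct} rather than iterating the binary product is a harmless repackaging (as you note, $W$ is finite here), and your remark that only condition~(3) becomes substantive in the $\Omega G$ setting is exactly how the paper uses this corollary later.
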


We can use the reciprocals of the equivariant Euler classes in an expression which imitates the sum over the fixed points in the Duistermaat-Heckman formula.  For any $p \in M^T$, let $W_p = \left\{\lambda^p_j\right\}_{j=1}^N$ denote the set of weights of the isotropy representation of $T$ on $T_p M$.  As in the case of the usual localization formula we must choose a \textit{polarization}, which is simply a choice of vector $\xi \in \mathfrak{t}$ such that for every $p \in M^T$, and for every $\lambda^p_j \in W_p$, we have $\lambda^p_j(\xi) \neq 0$.  For every $\lambda^p_j \in W_p$ we define the \textit{polarized weight} by:
\[ \tilde{\lambda}^p_j = \left\{ \begin{array}{cc} \lambda^p_j & \lambda^p_j(\xi) > 0 \\ -\lambda^p_j & \lambda^p_j(\xi) < 0 \end{array}\right. \]
and we adopt the notation as in \cite{guillemin1988kostant} by setting $(-1)^p = \displaystyle{\prod_{\lambda^p_j \in W_p} \text{sgn}\,\lambda^p_j(\xi)}$.

By definition, for every fixed point $p \in M^T$ we have that the polarized weights are contained in the half plane defined by $\xi$.  We define the cone $\gamma_p = \displaystyle{\bigcap_{\lambda^p_j \in W_p} H_{\tilde{\lambda}^p_j}}$, which is simply the intersection of the half spaces defined by the polarized weights.  We will call:
\[ \frac{1}{e_p^T(x)} = b_{\gamma_p}\left( \prod_{\tilde{\lambda}^p_j \in W_p} \frac{1}{\tilde{\lambda}^p_j(z)} \right) \]
the reciprocal of the equivariant Euler class to the normal bundle of $p$.  

\begin{define}
Suppose that $(M,\omega)$ has a Hamiltonian action of a compact, dimension $d$ torus $T$ such that all the fixed points are isolated; let $M^T$ denote the fixed point set and $\mu: M \to \mathfrak{t}^*$ the moment map.  We will call the following expression the Picken hyperfunction:
\[ L(x) = \frac{1}{(2\pi i)^d}\sum_{p \in M^T}(-1)^p\frac{e^{i\mu(p)(x)}}{e_p^T(x)} \]
\end{define}

\noindent\textit{Example: $S^2$ with a circle action by rotation}

We will first use a simple example to demonstrate that the formalism of hyperfunctions reproduces the results one would expect from the Duistermaat-Heckman function.  We choose a polarization $\xi = -1$.  For the usual Hamiltonian circle action on $S^2$ by counterclockwise rotation about the $z$-axis, there are fixed points at the north and south poles, $N$ and $S$, respectively.  The torus acts on $T_N S^2$ with weight $+1$, while it acts on $T_S S^2$ with weight $-1$.  Let's compute the reciprocal of the equivariant Euler class to the normal bundle of $N$ (as a hyperfunction).  There is only one weight at this fixed point.  We have
\[ \gamma_N = \left\{ x \in i\mathfrak{t}\,\vert\, x < 0\right\} \]
\[ (-1)^N = -1 \] 
The north pole contributes the following hyperfunction as a summand of the Picken hyperfunction, which we denote pictorially in figure \ref{fig:JKS2}:
\[ J_N (x) = b_{\gamma_N}\left( -\frac{e^{iz}}{z} \right) \]

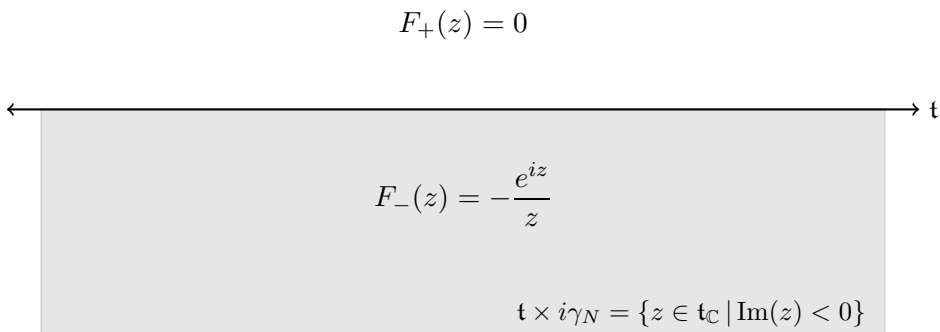
\begin{figure}
\begin{center}
\begin{tikzpicture}[scale = 1.5]
				\draw[black,thick,<->] (-4,0) -- (4,0) node[anchor=west]{$\mathfrak{t}$};
				\draw (2.5,0.2) ;
				
				\draw[black,thick,fill,opacity=0.1] (-3.7,0) rectangle (3.7,-2);
				\draw[black,thick,fill,opacity=0] (-3.7,0) rectangle (3.7,2);
				\draw (2,-1.8) node{$\mathfrak{t}\times i\gamma_N = \left\{z \in \mathfrak{t}_\mathbb{C}\,\vert\, \text{Im}(z) < 0\right\}$};
				
				\draw (0,0.75) node{\large $F_+(z) = 0$};
				\draw (0,-0.75) node{\large $F_-(z) = -\displaystyle{\frac{e^{iz}}{{z}}}$};
\end{tikzpicture}
\caption{A depiction of the hyperfunction $J_N(x) \in \mathcal{B}(\mathfrak{t})$}
\label{fig:JKS2}
\end{center}
\end{figure}

The contribution to the Picken hyperfunction coming from the south pole is computed similarly.  We obtain:
\[ \gamma_S = \left\{ x \in i\mathfrak{t}\,\vert\, x < 0\right\} \]
\[ (-1)^S = -1 \]
and so
\[ J_S(x) = b_{\gamma_S}\left( \frac{e^{-iz}}{z} \right) \] 
Since $\gamma_N = \gamma_S$ in this example, we simply call both of these $\gamma$.  The end result is that the Picken hyperfunction of this Hamiltonian group action is:
\[ 2\pi i \, L(x) = b_{\gamma}\left( -\frac{e^{iz}}{z} + \frac{e^{-iz}}{-z} \right)\]
or, thinking of hyperfunctions on $\mathbb{R}$ as pairs of holomorphic functions, this corresponds to the pair 
\[ 2\pi i L(x) = \left[0, \frac{-e^{iz} + e^{-iz}}{z}\right] \]
Had one chosen the polarization $\tilde{\xi} = +1$, one would have alternatively obtained the presentation
\[ 2 \pi i \tilde{L}(x) = \left[ \frac{e^{iz} - e^{-iz}}{z}, 0 \right] \]
however, $L(x) = \tilde{L}(x)$ as hyperfunctions because their difference extends analytically across the real axis.  The observation here is that a choice of polarization is simply enabling us to write down a presentation of a hyperfunction using a specific set of cones.

The Duistermaat-Heckman hyperfunction is the Fourier transform of the Picken hyperfunction.  We will now compute it according to the formula in equation \ref{eqn:hyperft}.  We choose the holomorphic partition of unity given by the functions:
\[ \chi_{+}(z) = \frac{1}{1+e^{-z}} \]
\[\chi_{-}(z) = \frac{1}{1+e^{z}} \]
which gives a decomposition of the Picken hyperfunction into four parts.
\[ 2\pi i\, L(x) = b_{\gamma}\left( -\frac{e^{iz}}{z} \chi_+(z) \right) + b_{\gamma}\left( \frac{e^{-iz}}{z}\chi_+(z) \right) + b_{\gamma}\left( -\frac{e^{iz}}{z}\chi_-(z) \right) + b_{\gamma}\left( \frac{e^{-iz}}{z}\chi_-(z) \right) \]
The Fourier transform can now be computed termwise, noticing that the first two terms in the above expression are exponentially decreasing on $\text{Re}(z) < 0$, while the third and fourth terms are exponentially decreasing on the cone $\text{Re}(z) > 0$.  Let $1\gg \delta > 0$, then we may write the Fourier transform $\mathscr{F}(L(x)) = G_+(\zeta + i0) + G_{-}(\zeta - i0)$ where:
\[ G_{+}(\zeta) = \int_{-\infty-i\delta}^{\infty-i\delta} -\frac{e^{-i(\zeta-1)z}}{z(1+e^{z})}\,dz + \int_{-\infty-i\delta}^{\infty-i\delta} \frac{e^{-i(\zeta+1)z}}{z(1+e^{z})}\,dz \]
\[ G_{-}(\zeta) = \int_{-\infty-i\delta}^{\infty-i\delta} -\frac{e^{-i(\zeta-1)z}}{z(1+e^{-z})}\,dz + \int_{-\infty-i\delta}^{\infty-i\delta} \frac{e^{-i(\zeta+1)z}}{z(1+e^{-z})}\,dz \]
Each of these integrals can be computed by completing to a semicircular contour in the lower half plane and applying the residue theorem (noting that, as the contour is oriented clockwise, we must include an extra minus sign).  The contour we use for the first integral appearing in $G_+(\zeta)$ is depicted in Figure \ref{fig:contour}, along with the locations of the poles.  

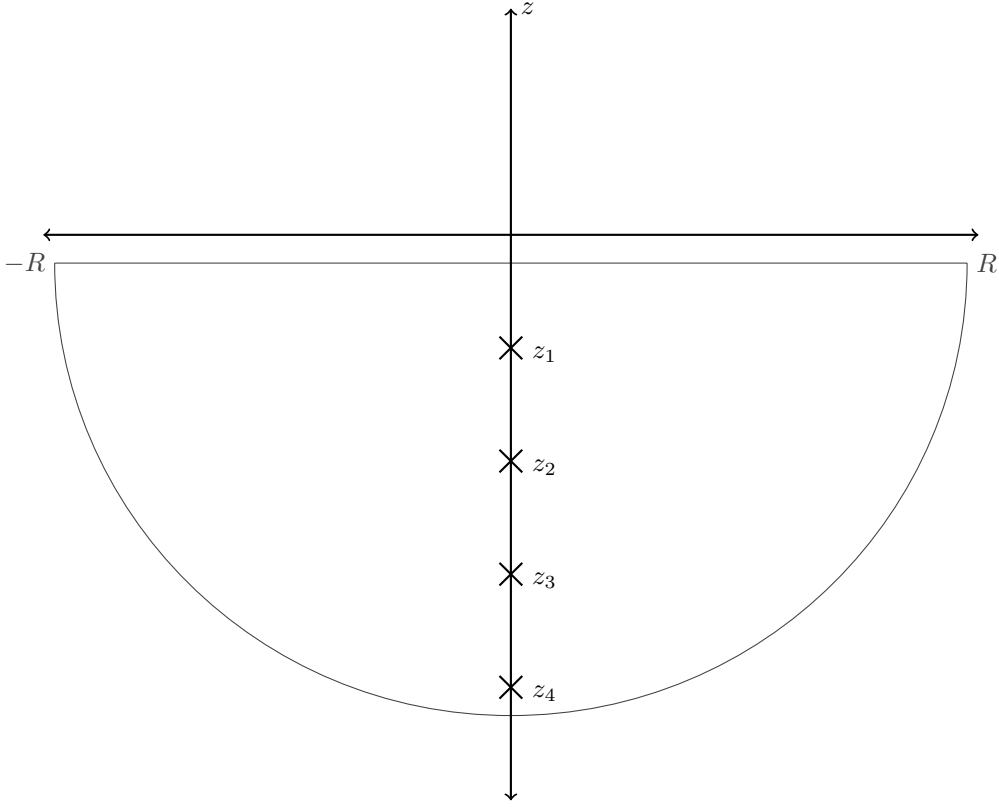
\begin{figure}[h]
\begin{center}
\begin{tikzpicture}[scale = 1.5]
				\draw[black,thick,<->] (-4.1,0) -- (4.1,0);
				\draw[black,thick,<->] (0,-5) -- (0,2) node[anchor = west]{$z$};
				\draw[black,opacity=0.75] (-4,-0.25) node[anchor = east]{$-R$} -- (4,-0.25) node[anchor = west]{$R$};
				\draw[black,opacity=0.75] (4,-0.25) arc(0:-180:4);
				
				\draw[black,thick] (-0.1,-0.9) -- (0.1,-1.1);
				\draw[black,thick] (0.1,-0.9) node[anchor=north west]{$z_1$} -- (-0.1,-1.1);
				
				\draw[black,thick] (-0.1,-1.9) -- (0.1,-2.1);
				\draw[black,thick] (0.1,-1.9) node[anchor=north west]{$z_2$} -- (-0.1,-2.1);
				
				\draw[black,thick] (-0.1,-2.9) -- (0.1,-3.1);
				\draw[black,thick] (0.1,-2.9) node[anchor=north west]{$z_3$} -- (-0.1,-3.1);
				
				\draw[black,thick] (-0.1,-3.9) -- (0.1,-4.1);
				\draw[black,thick] (0.1,-3.9) node[anchor=north west]{$z_4$} -- (-0.1,-4.1);
\end{tikzpicture}
\caption{Integration contour for the first integral in $G_+(\zeta)$}
\label{fig:contour}
\end{center}
\end{figure}

We show how to compute the first integral in the expression for $G_+(\zeta)$; the rest are similar.  The integrand of the first integral in $G_+(\zeta)$ has poles at $z_0 = 0$ and $z_k = -(2k+1)\pi i$ for $k \in \mathbb{Z}$, however, the only poles inside the contour (in the limit as the radius of the semicircle tends to infinity) are the poles at $z_k$ for $k \geq 0$.  Also, in the limit as the radius of the semicircle gets large we see that the contribution to the integral coming from the semicircular part of the contour vanishes because the integrand is exponentially decreasing in $\text{Re}(z)$, and decreasing exponentially in $\text{Im}(z)$ when $\text{Im}(z) < 0$.  By the residue theorem:
\begin{eqnarray*}
\int_{-\infty-i\delta}^{\infty-i\delta} \frac{e^{-i(\zeta-1)z}}{z(1+e^{z})}\,dz &=& -2\pi i \sum_{k=0}^{\infty} \text{Res}\left(\frac{e^{-i(\zeta-1)z}}{z(1+e^{z})}, z = z_k\right) \\
&=& -2\pi i \sum_{k=0}^{\infty} \frac{ e^{-(\zeta - 1)(2k+1)\pi }}{ -(2k+1)\pi i (-1) } \\
&=& 2\pi \sum_{k=0}^{\infty} \int_{c}^{\zeta} e^{-(\zeta' - 1)(2k+1)\pi}\,d\zeta' \\
&=& 2\pi \int_{c}^{\zeta} \sum_{k=0}^{\infty} e^{-(\zeta' - 1)(2k+1)\pi}\,d\zeta' \\
&=& 2\pi \int_{c}^{\zeta} \frac{d\zeta' }{e^{\pi(\zeta'-1)} - e^{-\pi(\zeta'-1)}}\\
&=& \pi \int_{c}^{\zeta} \frac{d\zeta' }{\sinh(\pi(\zeta'-1))} \,d\zeta' \\
&=& \text{Log}\left(\tanh\left( \frac{\pi(\zeta-1)}{2} \right)\right) \quad\text{valid for $\text{Im}(\zeta) > 0$}
\end{eqnarray*}
From the second to third line, we found a primitive function for the summand. From the third to the fourth line we applied the monotone convergence theorem to interchange the order of summation and integration.  A nearly identical computation yields the result:
\[ \int_{-\infty- i\delta}^{\infty-i\delta} \frac{e^{-i(\zeta+1)z}}{z(1+e^{z})}\,dz = \text{Log}\left(\tanh\left( \frac{\pi(\zeta+1)}{2} \right)\right) \quad\text{valid for $\text{Im}(\zeta) > 0$}\]
Summarizing, to this point we have computed:
\[ G_{+}(\zeta) = \text{Log}\left(\tanh\left( \frac{\pi(\zeta+1)}{2} \right)\right)-\text{Log}\left(\tanh\left( \frac{\pi(\zeta-1)}{2} \right)\right) \quad\text{valid for $\text{Im}(\zeta) > 0$}\]
\[ G_{-}(\zeta) = -\text{Log}\left(\tanh\left( \frac{\pi(\zeta+1)}{2} \right)\right)+\text{Log}\left(\tanh\left( \frac{\pi(\zeta-1)}{2} \right)\right)\quad\text{valid for $\text{Im}(\zeta) < 0$} \]
The above expressions can be simplified.  We notice that the holomorphic function $\text{Log}(\tanh(\zeta)) - \text{Log}(\zeta)$ admits an analytic extension across a neighbourhood of the real axis, and is therefore zero as a hyperfunction.  This means that all of the $\tanh$ factors may be ignored for the purposes of computing the hyperfunction Fourier transform.  Therefore, the final result of our computation is:
\[ \mathscr{F}(L(x)) = b_{+}\left( -\frac{1}{2\pi i}\text{Log}\left(\frac{\zeta-1}{\zeta+1} \right) \right) - b_{-}\left( -\frac{1}{2\pi i}\text{Log}\left(\frac{\zeta-1}{\zeta+1} \right) \right) \]
which we recognize as the standard defining hyperfunction of $\chi_{[-1,1]}(x)$ (see \cite{kaneko1989introduction} Example 1.3.11, p. 29).  This has shown that the Fourier transform of the Picken hyperfunction gives the standard defining hyperfunction of the Duistermaat-Heckman distribution.

Jeffrey and Kirwan, building on work of Witten \cite{witten1992two}, formalized the notion of a residue in symplectic geometry \cite{jeffrey1995localization}.  They fruitfully applied this construction to compute relations in the cohomology ring of the moduli space of stable holomorphic bundles on a Riemann surface \cite{jeffrey1995intersection}.  We expect that the properties that uniquely characterize the residue (c.f. Proposition 8.11, \cite{jeffrey1995localization}) can be recovered from the usual notion of a residue \cite{griffiths2014principles} of a multivariable complex meromorphic function using our construction of the Picken hyperfunction.


\section{$\Omega G$ and its Hamiltonian group action}

Let $G$ be a compact connected real Lie group, and call its Lie algebra $\mathfrak{g}$.  In this article we will consider the space of smooth loops in $LG = C^\infty(S^1,G)$.  $LG$ is itself an infinite dimensional Lie group, with the group operation taken to be multiplication in $G$ pointwise along a loop.  The Lie algebra of $LG$ is easily seen to consist of the space of smooth loops into the Lie algebra, which we denote $L\mathfrak{g}$.  

We will also consider its quotient $\Omega G = LG/G$, where the quotient is taken with respect to the subgroup of constant loops.  One may alternatively identify $\Omega G$ as the collection of loops, such that the identity in $S^1$ maps to the identity in $G$: 
\[  \Omega G = \left\{ \gamma \in LG: \,\gamma(1) = e\right\} \]
Its Lie algebra can be identified with the subset $\Omega\mathfrak{g} = \left\{X: S^1 \to \mathfrak{g}\,\vert\, X(0) = 0\right\}$.

$\Omega G$ has a lot of extra structure, which essentially comes from its realization as a coadjoint orbit of a central extension of $LG$ \cite{khesin2008geometry}.  We can give $\Omega G$ a symplectic structure as follows.  Since $G$ is a compact Lie group, there exists a non-degenerate symmetric bilinear form $\langle\cdot,\cdot\rangle: \mathfrak{g}\times\mathfrak{g} \to \mathbb{R}$.  This form induces an antisymmetric form:
\[ \omega_e: L\mathfrak{g} \times L\mathfrak{g} \to \mathbb{R} \]
\[  (X,Y) \mapsto \frac{1}{2\pi} \int_{0}^{2\pi} \langle X(\theta), Y'(\theta) \rangle\,d\theta \]
This bilinear form is an antisymmetric, non-degenerate form when restricted to $\Omega\mathfrak{g}$, and extends to a symplectic form on $\Omega G$ using a left trivialization of the tangent bundle of $\Omega G$.  That is, for every $\gamma \in \Omega G$ we fix the isomorphism 
\[ T_\gamma \Omega G \simeq \Omega\mathfrak{g} \]
\[  X \mapsto \left( \theta \mapsto \gamma^{-1}(\theta)X(\theta) \right) \]
This choice allows us to define a form on $\Omega G$ as:
\[ \omega_\gamma: T_\gamma \Omega G \times T_\gamma \Omega G \to \mathbb{R} \]
\[ (X,Y) \mapsto \omega_e(\gamma^{-1}X,\gamma^{-1}Y) \]
The form so defined is symplectic; a proof can be found in \cite{pressleysegal}.  

Consider the following group action on $\Omega G$. Fix $T\subseteq G$ a maximal compact torus, and let $\mathfrak{t}$ be its Lie algebra.  Pointwise conjugation by elements of $T$ defines a $T$ action on $\Omega G$.
\[ T\times \Omega G \to \Omega G \]
\[ t\cdot \gamma = \left( \theta \mapsto t\gamma(\theta)t^{-1} \right) \]
There is also an auxiliary action of $S^1$ on $\Omega G$, which comes about by descending the loop rotation action on $LG$ to the quotient $LG/G$.  Explicitly,
\[ S^1 \times \Omega G \to \Omega G \]
\[ \exp(i\psi) \cdot \gamma = \left( \theta \mapsto \gamma(\theta+\psi)\gamma(\psi)^{-1} \right) \] 
These actions commute with one another, so define an action of $T\times S^{1}$ on $\Omega G$.  We will let $\text{pr}_{\mathfrak{t}}: \mathfrak{g} \to \mathfrak{t}$ denote the orthogonal projection coming from the Cartan-Killing form.  We now define two functions on $\Omega G$:
\[ p(\gamma) = \frac{1}{2\pi}\text{pr}_{\mathfrak{t}}\left(\int_0^{2\pi} \gamma^{-1}(\theta)\gamma'(\theta)\,d\theta \right) \]
\[ E(\gamma) = \frac{1}{2\pi}\int_0^{2\pi} \vert\vert \gamma'(\theta)\vert\vert^{2} \,d\theta \]

\begin{prop} \cite{atiyahpressley}
The $T\times S^1$ action on $\Omega G$ is Hamiltonian.  The moment map is given by:
\[ \mu: \Omega G \to \text{Lie}(T\times S^{1}) \]
\[ \gamma \mapsto \left(\begin{array}{c} p(\gamma) \\ E(\gamma) \end{array}\right) \]
Furthermore, the Hamiltonian vector fields associated to the group action are given by:
\[(X_{E})_\gamma = \gamma'(\theta) - \gamma(\theta)\gamma'(0) \]
\[ (X_{p_\tau})_\gamma = \tau\gamma(\theta) - \gamma(\theta)\tau \]
where $\tau \in \mathfrak{t}$.
\end{prop}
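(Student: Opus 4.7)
The plan is to check the two assertions in the proposition separately and essentially independently: first, identify the fundamental vector fields of the action; second, verify that the proposed Hamiltonians generate them via $\iota_{X_H}\omega = dH$ (up to the sign/normalization convention chosen for the symplectic form). Throughout, I will work in the left trivialization $T_\gamma\Omega G \simeq \Omega\mathfrak{g}$ already set up in the paper, and denote $\xi = \gamma^{-1}\gamma' \in L\mathfrak{g}$, $Y = \gamma^{-1}X \in \Omega\mathfrak{g}$ for a tangent vector $X$ at $\gamma$.

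For the first claim, I would simply differentiate the defining formulas for the actions at the identity of the acting group. For the loop rotation action $e^{i\psi}\cdot\gamma(\theta) = \gamma(\theta+\psi)\gamma(\psi)^{-1}$, differentiating at $\psi=0$ gives $\gamma'(\theta) - \gamma(\theta)\gamma'(0)$, using $\gamma(0)=e$. For the torus action $\exp(s\tau)\cdot\gamma(\theta) = \exp(s\tau)\gamma(\theta)\exp(-s\tau)$, differentiating at $s=0$ gives $\tau\gamma(\theta) - \gamma(\theta)\tau$. The vector field expressions in the statement follow.

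For the second claim, I would separately verify the two moment map components. The key computational tool is the Maurer--Cartan-type variation formula: if $\gamma_s$ is a smooth family with $\partial_s\gamma_s|_{s=0} = X$, then
\begin{equation*}
\delta\xi \;=\; \frac{d}{ds}\Bigl(\gamma_s^{-1}\gamma_s'\Bigr)\Big|_{s=0} \;=\; Y' + [\xi, Y].
\end{equation*}
Using $\mathrm{Ad}$-invariance of $\langle\cdot,\cdot\rangle$, the cross-term $\langle \xi,[\xi,Y]\rangle$ vanishes, so
\begin{equation*}
dE(X) \;=\; \frac{1}{\pi}\int_0^{2\pi}\langle \xi,Y'\rangle\, d\theta,
\end{equation*}
while the left trivialization of $(X_E)_\gamma$ is $\xi(\theta) - \xi(0)$, so
\begin{equation*}
\omega_\gamma\!\bigl((X_E)_\gamma, X\bigr) \;=\; \frac{1}{2\pi}\int_0^{2\pi} \bigl\langle \xi - \xi(0),\, Y'\bigr\rangle\, d\theta \;=\; \frac{1}{2\pi}\int_0^{2\pi}\langle \xi,Y'\rangle\, d\theta,
\end{equation*}
where the $\xi(0)$-piece drops out because $\int_0^{2\pi} Y'\,d\theta = Y(2\pi) - Y(0) = 0$ (both endpoints vanish since $\gamma\in\Omega G$ forces $X(0)=0$). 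The two expressions agree up to the convention-dependent overall factor. The torus component is handled analogously: $p_\tau(\gamma) = \langle p(\gamma),\tau\rangle = \frac{1}{2\pi}\int\langle\xi,\tau\rangle\, d\theta$ (using that $\tau\in\mathfrak{t}$ so orthogonal projection is harmless), and then
\begin{equation*}
dp_\tau(X) \;=\; \frac{1}{2\pi}\int_0^{2\pi}\langle Y' + [\xi,Y],\, \tau\rangle\, d\theta,
\end{equation*}
while the left-trivialized vector field is $\mathrm{Ad}(\gamma^{-1})\tau - \tau$. The identity $\tfrac{d}{d\theta}\mathrm{Ad}(\gamma^{-1})\tau = [\mathrm{Ad}(\gamma^{-1})\tau,\xi]$ together with an integration by parts (the boundary term vanishes because $Y(0)=Y(2\pi)=0$) and $\mathrm{Ad}$-invariance reduces $\omega_\gamma((X_{p_\tau})_\gamma,X)$ to the same expression.

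The main obstacle is the bookkeeping: keeping the left-trivialization straight, correctly computing $\delta\xi$, and using integration by parts in a way that exploits both the $\mathrm{Ad}$-invariance of $\langle\cdot,\cdot\rangle$ and the fact that variations $X\in T_\gamma\Omega G$ vanish at the basepoint. Once those ingredients are combined, the moment map property follows by direct comparison; the ``interesting'' structural input is simply that the Maurer--Cartan correction $[\xi,Y]$ cancels against the $\theta$-derivative of $\mathrm{Ad}(\gamma^{-1})\tau$ under the invariant pairing.
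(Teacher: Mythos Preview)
The paper does not give its own proof of this proposition: it is stated with a citation to Atiyah--Pressley and no argument is supplied. So there is nothing in the paper to compare your proof against.

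Your approach---differentiate the action to obtain the fundamental vector fields, then verify $\iota_{X_H}\omega = dH$ directly using the left trivialization and the Maurer--Cartan variation formula $\delta\xi = Y' + [\xi,Y]$---is exactly the standard route and is correct in outline. The vector-field identifications are immediate, and the $E$-component goes through cleanly (the factor of two you flag between $dE$ and $\omega(X_E,\cdot)$ is indeed a normalization issue in how $E$ and $\omega$ are scaled in this paper). For the $p_\tau$-component your final sentence is a bit compressed: after the integration by parts one lands on $\int\langle \mathrm{Ad}_{\gamma^{-1}}\tau,\,[\xi,Y]\rangle\,d\theta$, while $dp_\tau(X)$ gives $\int\langle \tau,\,[\xi,Y]\rangle\,d\theta$, so the phrase ``$\mathrm{Ad}$-invariance reduces \ldots\ to the same expression'' deserves one more line to make the matching explicit (and to fix the sign convention). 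None of this is a genuine obstacle---the ingredients you list are precisely the right ones, and the verification is routine once the bookkeeping is done carefully---but since the paper offers no proof at all, your write-up would benefit from spelling that last step out rather than asserting it.
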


If $\beta \in \mathfrak{t}\oplus\mathbb{R}$ then we let $(X_\beta)_\gamma$ denote the Hamiltonian vector field evalutated at the loop $\gamma$.


\section{Fixed Points Sets of Rank One Subtori}

We will now proceed to identify the fixed point sets of dimension one subtori of $T\times S^{1}$ acting on $\Omega G$.  The moment map image of the fixed point submanifolds should correspond to the locus where the Duistermaat-Heckman density function is not differentiable.  Using the exponential map, we identify $X_*(T\times S^1) \simeq P\times\mathbb{Z}$, where $P$ is the coweight lattice of $\mathrm{Lie}(T)$.  Fix an element $\beta = (\lambda, m) \in X_{*}(T\times S^{1})$ and call the cocharacter it generates by $T_{\beta}$.  Let $\Lambda \in X_*(T)$ be the cocharacter generated by $\lambda$,
\[ \Lambda(\theta) = \exp(i\lambda\theta) \]
We will say the fixed point set of $T_{\beta}$ is trivial when $\Omega G^{T_{\beta}} = \text{Hom}_{\text{Grp}}(S^{1},T)$.  In this section, we say that $L \subseteq G$ is a \textit{Levi subgroup} if and only if there exists a parabolic subgroup $Q \subseteq G_\mathbb{C}$ such that $L_\mathbb{C}$ is a Levi factor of $Q$.  Every Levi subgroup of $G$ is the centralizer of a subtorus$S\subseteq T$.  

If we have two groups $K$ and $N$, together with a map $\varphi: K \to \text{Aut}\,N$, then we can construct the semidirect product group $N\rtimes K$ whose point set is the Cartesian product $N\times K$, but the group operation is $(n,k)\cdot(n',k') = ((\phi(k')\cdot n)n',kk')$.  In our specific context, if we fix any Levi subgroup $L\subseteq G$, we can construct a group homomorphism: 
\[ \varphi_{\beta}: S^{1}\to \text{Aut}\,L\] 
\[ \varphi_{\beta}(\psi)\cdot x =  \Lambda\left(\frac \psi m\right)^{-1} x\, \Lambda\left(\frac\psi m\right)\]
\textit{Remarks}:
\begin{enumerate}
\item Since $\varphi_\beta(1) = \mathrm{id}_L$ and $S^1$ is connected then we may consider $\varphi_\beta: S^1 \to \mathrm{Inn}(L)$.  We identify $\mathrm{Inn}(L) \simeq L_\text{ad}$, which may further be identified with $[L,L]/ Z(L) \cap [L,L]$.  Under these identifications, $\varphi_\beta \in X_* (T_{\text{ad}})$ is a cocharacter of the maximal torus in $L_\text{ad}$.   \item This homomorphism is well defined if and only if $\Lambda(\frac{2\pi}{m}) \in Z(L)$.  In particular, $\lambda / m$ must be an element of the coweight lattice for the Levi subgroup $L$, mod $\mathfrak{z}(L)$.
\item $\varphi_\beta = \varphi_{\beta'}$ if and only if $\lambda/m - \lambda'/m' \in \mathfrak{z}(L)$
\end{enumerate}

We will denote the resulting semidirect product group as $L\rtimes_{\beta} S^{1}$.

\vspace{0.2cm}

It can be easily seen that for any Levi subgroup $L$, $T\times S^{1}$ is a maximal torus of $L\rtimes_{\beta}S^{1}$.  Any one parameter subgroup of $L\rtimes_{\beta}S^{1}$ is abelian, and is therefore contained in a maximal torus conjugate to $T\times S^{1}$.  We can obtain all one parameter subgroups by considering one of the form $(\eta(\theta),\theta)$ for $\eta \in \mathrm{Hom}(S^{1},T)$, then conjugating by an element of $L\rtimes_{\beta} S^{1}$.  
\begin{equation}
\label{equation:loopform}
\gamma(\theta) = \Lambda\left(\frac{\psi-\theta}{m}\right)g\Lambda\left(\frac{\theta}{m}\right)\eta(\theta)g^{-1}\Lambda\left(\frac{\psi}{m}\right)^{-1}
\end{equation}


\begin{prop}
\label{prop:BetaFixed}
For any $\beta \in P\times \mathbb{Z}$, there exists a Levi subgroup $T \subseteq L_\beta \subseteq G$, such that $\gamma \in \Omega G^{T_\beta}$ if and only if $(\gamma(\theta),\theta)$ is a one parameter subgroup of $L_\beta\rtimes_{\beta} S^{1}$.
\end{prop}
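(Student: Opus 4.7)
The plan is to translate the $T_\beta$-fixedness condition into a cocycle equation that coincides tautologically with the group law of a semidirect product $L_\beta \rtimes_\beta S^1$, and then to extract the correct Levi subgroup $L_\beta$ from the $2\pi$-periodicity constraint on loops in $\Omega G$.

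First, I would write the $T_\beta$-action explicitly. Combining pointwise conjugation by $T$ and the loop-rotation action of $S^1$, the element $s \mapsto (\exp(is\lambda), e^{ims}) \in T_\beta$ sends $\gamma$ to the loop $\theta \mapsto \exp(is\lambda)\,\gamma(\theta+ms)\,\gamma(ms)^{-1}\exp(-is\lambda)$. Taking $m \neq 0$ (the case $m = 0$ reduces to the trivial observation that $\gamma$ takes values in $C_G(\Lambda(\mathbb{R}))$ with no further constraint, and the semidirect product degenerates to a direct product) and substituting $u = ms$, the condition $\gamma \in \Omega G^{T_\beta}$ becomes
\[
\gamma(\theta+u)\;=\;\Lambda(u/m)^{-1}\,\gamma(\theta)\,\Lambda(u/m)\,\gamma(u)\quad\text{for all }\theta,u\in\mathbb{R}.
\]
The right-hand side is exactly $\bigl(\varphi_\beta(u)\cdot\gamma(\theta)\bigr)\gamma(u)$, which is the formula governing the product $(\gamma(\theta),\theta)(\gamma(u),u)$ in any semidirect product $L\rtimes_\beta S^1$ with $\gamma(\theta) \in L$. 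Hence this equation is literally the assertion that $\theta \mapsto (\gamma(\theta),\theta)$ is a group homomorphism into $G \rtimes_\beta S^1$, which is the one-parameter subgroup condition modulo specifying the Levi.

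Second, I would extract $L_\beta$ by imposing $2\pi$-periodicity. Setting $u = 2\pi$ in the cocycle equation and using $\gamma(2\pi) = \gamma(0) = e$, the requirement $\gamma(\theta+2\pi) = \gamma(\theta)$ forces
\[
\Lambda(2\pi/m)^{-1}\,\gamma(\theta)\,\Lambda(2\pi/m) \;=\; \gamma(\theta) \quad \text{for all }\theta,
\]
so $\gamma(\theta) \in L_\beta := C_G(\Lambda(2\pi/m))$. The centralizer of a torus element in a compact connected Lie group is a Levi subgroup containing $T$, and by construction $\Lambda(2\pi/m) \in Z(L_\beta)$, so the remarks following the definition of $\varphi_\beta$ guarantee that $\varphi_\beta$ descends to a well-defined homomorphism $S^1 \to \text{Aut}(L_\beta)$; thus $L_\beta \rtimes_\beta S^1$ is a bona fide group. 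Combining with the first step yields both directions of the equivalence: $\gamma \in \Omega G^{T_\beta}$ if and only if $\gamma$ takes values in $L_\beta$ and satisfies the cocycle equation, which is the one-parameter subgroup condition in $L_\beta \rtimes_\beta S^1$.

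The main obstacle is bookkeeping rather than substance: one must keep straight the interaction between the source circle $S^1$ on which $\gamma$ lives and the $S^1$ factor of $L_\beta \rtimes_\beta S^1$, ensuring the homomorphism from $\mathbb{R}$ produced in step one descends to $S^1$ precisely when $\gamma$ takes values in $L_\beta$. Choosing the Levi any larger would obstruct this descent; choosing it smaller would miss fixed loops, so $L_\beta = C_G(\Lambda(2\pi/m))$ is pinned down exactly. A minor secondary point is that one should pass to the identity component of the centralizer if $G$ is not simply connected, but $\gamma(S^1)$ lies in this component automatically by continuity of $\gamma$ together with $\gamma(0) = e$.
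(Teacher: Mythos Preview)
Your argument is correct and matches the paper's proof in the forward direction: both derive the cocycle identity $\gamma(\theta+u)=\Lambda(u/m)^{-1}\gamma(\theta)\Lambda(u/m)\gamma(u)$ from $T_\beta$-fixedness, then set $u=2\pi$ to force $\gamma(\theta)\in L_\beta:=Z_G(\Lambda(2\pi/m))$ and recognize the cocycle identity as the semidirect-product multiplication law.

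For the converse the paper takes a slightly different path: rather than reading the cocycle equation back off the group law of $L_\beta\rtimes_\beta S^1$ as you do, it invokes the explicit conjugation form of a one-parameter subgroup (their equation for $\gamma$ in terms of $g$, $\eta$, $\psi$) and then checks directly that the Hamiltonian vector field $X_\beta$ vanishes at such a $\gamma$. Your route is shorter and avoids that ``straightforward (but tedious) verification''; the paper's route has the side benefit of producing the parameterization used immediately afterward in Proposition~\ref{prop:fps} to identify the connected components of the fixed set with translated adjoint orbits.
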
 
\begin{proof}

Fix $\beta \in P\times \mathbb{Z}$ and set $L_\beta = Z_G(\Lambda(2\pi/m))$; that $T\subseteq L_\beta$ follows, since $\Lambda(2\pi/m) \in T$ and $T$ is abelian.  

Suppose we have a loop $\gamma$ fixed by $T_{\beta}$.  Recall how $T_{\beta}$ acts on a loop $\gamma \in \Omega G$.  For every $(\Lambda(\psi),\exp(im\psi)) \in T_{\beta}$, the action is:
\[ (\Lambda(\psi),\exp(im\psi))\cdot \gamma(\theta) = \Lambda(\psi)\gamma(\theta + m\psi)\Lambda^{-1}(\psi)\gamma(m\psi)^{-1} \qquad \forall\,\psi,\theta \in [0,2\pi)\]
Let's rescale the $\psi$ variable, then by periodicity we may write the condition to be fixed under $T_{\beta}$ as:
\[ \gamma(\theta + \psi) = \Lambda\left(\frac \psi m\right)^{-1}\gamma(\theta)\Lambda\left(\frac\psi m\right)\gamma(\psi)\qquad\forall\,\theta,\psi\in [0,2\pi) \]
When $\psi = 2\pi$ in the above equation we get the condition $\gamma(\theta) \in L_\beta$ for all $\theta$.  That $(\gamma(\theta),\theta)$ is a one parameter subgroup of $L\rtimes_{\beta} S^{1}$ follows immediately from the multiplication rule for the semidirect product.  

Now suppose conversely that $(\gamma(\theta),\theta)$ is a one parameter subgroup of $L\rtimes_{\beta}S^{1}$.  There exists $\eta \in X_*(T)$, $g \in L$ and $\psi \in S^1$ such that $\gamma$ can be written as in equation \ref{equation:loopform}.  To show that $\gamma$ is fixed by $T_\beta$ it suffices to prove that the Hamiltonian vector field corresponding to $\beta$ vanishes at $\gamma$.  This is a straightforward (but tedious) verification.

\end{proof} 

A consequence of the previous proposition is that for any such $\beta$, there exists a Levi subgroup $L_\beta$ such that $\Omega G^{T_\beta} = \Omega L_\beta^{T_\beta}$.  This follows, since the semidirect product formula forces any loop fixed under $T_\beta$ to have its image be contained in $L_\beta$.

\vspace{0.2cm}

\begin{prop}
\label{prop:fps}
Every connected component of the fixed point set of $T_{\beta}$ is a translate of an adjoint orbit in $\text{Lie}(L_\beta)\subseteq \mathfrak{g}$.
\end{prop}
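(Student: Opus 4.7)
The plan is to use Proposition~\ref{prop:BetaFixed} to identify $\Omega G^{T_\beta}$ with an explicit, finite-dimensional subset of $\mathfrak{l}_\beta = \mathrm{Lie}(L_\beta)$, and then to describe this subset via the Cartan decomposition for the compact connected group $L_\beta$. Concretely, I identify a fixed loop $\gamma$ with its initial velocity $X := \gamma'(0)\in\mathfrak{l}_\beta$: since $\theta\mapsto(\gamma(\theta),\theta)$ is a one-parameter subgroup of $L_\beta\rtimes_\beta S^1$, it is determined by its Lie-algebra tangent vector $(X,1)\in\mathfrak{l}_\beta\oplus\mathbb{R}$.

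First, I would solve explicitly for $\gamma$ in terms of $X$. Differentiating the $T_\beta$-fixed identity $\gamma(\theta+\psi)=\Lambda(\psi/m)^{-1}\gamma(\theta)\Lambda(\psi/m)\gamma(\psi)$ in $\theta$ at $\theta=0$ yields the ODE $\gamma'(\psi)\gamma(\psi)^{-1}=\Lambda(\psi/m)^{-1}X\Lambda(\psi/m)$, and the substitution $\gamma(\psi)=\Lambda(\psi/m)^{-1}h(\psi)$ reduces this to the constant-coefficient equation $h'(\psi)=(X+i\lambda/m)h(\psi)$ with $h(0)=e$. Hence
\[
\gamma(\psi)=\Lambda(\psi/m)^{-1}\exp\bigl(\psi(X+i\lambda/m)\bigr),
\]
and the closing condition $\gamma(2\pi)=e$ becomes $\exp(2\pi Y)=\Lambda(2\pi/m)$ for $Y:=X+i\lambda/m\in\mathfrak{l}_\beta$. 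Equivalently, the map $\gamma\mapsto X$ identifies $\Omega G^{T_\beta}$ with $\mathcal{Y}_\beta - i\lambda/m\subseteq\mathfrak{l}_\beta$, where $\mathcal{Y}_\beta:=\{Y\in\mathfrak{l}_\beta:\exp(2\pi Y)=\Lambda(2\pi/m)\}$.

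Next I would describe $\mathcal{Y}_\beta$ as a disjoint union of adjoint orbits. Since $L_\beta$ is compact and connected, every $Y\in\mathfrak{l}_\beta$ is $\mathrm{Ad}(L_\beta)$-conjugate to some $Y_t\in\mathfrak{t}$, and since $\Lambda(2\pi/m)\in Z(L_\beta)$ by the very definition $L_\beta=Z_G(\Lambda(2\pi/m))$, conjugating the defining equation by $g^{-1}\in L_\beta$ reduces it to $\exp(2\pi Y_t)=\Lambda(2\pi/m)$, i.e.\ $Y_t\in i\lambda/m+\ker(\exp(2\pi\cdot):\mathfrak{t}\to T)$. Thus
\[
\mathcal{Y}_\beta=\bigsqcup_{[\mu]}\mathrm{Ad}(L_\beta)\cdot(i\lambda/m+\mu),
\]
the disjoint union being taken over $W(L_\beta)$-orbit representatives $[\mu]$ in the lattice $\ker(\exp(2\pi\cdot))$. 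Each adjoint orbit is connected because $L_\beta$ is, and distinct $W(L_\beta)$-orbits produce disjoint adjoint orbits, so these orbits are precisely the connected components of $\mathcal{Y}_\beta$. Translating back by $-i\lambda/m$ exhibits each connected component of $\Omega G^{T_\beta}$ as a translate, inside $\mathfrak{l}_\beta$, of an adjoint orbit of $L_\beta$.

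The step requiring the most care is verifying that $\gamma\mapsto\gamma'(0)$ really is a smooth bijection between $\Omega G^{T_\beta}$ and its image in $\mathfrak{l}_\beta$, so that the connected-component description transfers faithfully; this amounts to matching the closed-form solution above with the semidirect-product parametrization in equation~\ref{equation:loopform} and invoking smooth dependence of ODE solutions on initial data. After this bookkeeping, the proposition reduces to the finite-dimensional structure theory of the compact connected group $L_\beta$ used above.
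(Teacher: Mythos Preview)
Your argument is correct and shares the paper's core idea: identify a $T_\beta$-fixed loop $\gamma$ with its initial velocity $\gamma'(0)\in\mathfrak{l}_\beta$ via the ODE governing one-parameter subgroups of $L_\beta\rtimes_\beta S^1$, and then recognize the resulting set of initial data as a union of translated $\mathrm{Ad}(L_\beta)$-orbits. The difference is in the final step. The paper simply differentiates the explicit parametrization of equation~\ref{equation:loopform} at $\theta=0$ to read off $\gamma'(0)=\mathrm{Ad}_{\Lambda(\psi/m)g}\!\left[\lambda/m+\eta'(0)\right]-\lambda/m$, and then notes that varying $g\in L_\beta$ and $\psi$ within a component sweeps out a translated adjoint orbit. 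You instead integrate the ODE in closed form, impose the periodicity condition $\gamma(2\pi)=e$ to obtain $\exp(2\pi Y)=\Lambda(2\pi/m)$ with $Y=\gamma'(0)+\lambda/m$, and then invoke the Cartan decomposition of $L_\beta$ to classify the solutions as $\mathrm{Ad}(L_\beta)$-orbits through lattice translates of $\lambda/m$. The two routes land on the same description (your lattice element $\mu$ is the paper's $\eta'(0)$), but your version has the advantage of making explicit why distinct translated orbits are genuinely distinct connected components---via the $W(L_\beta)$-orbit indexing---whereas the paper leaves this implicit. The paper's version is shorter because it reuses the parametrization already established in Proposition~\ref{prop:BetaFixed}.
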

\begin{proof}
Fix a loop $\gamma$ in some connected component of the fixed point set of $T_{\beta}$.  Using the exponential map on $L\rtimes_{\beta}S^{1}$, it can be seen that $(\gamma(\theta),\theta)$ is a one-parameter subgroup of $L_\beta \rtimes_{\beta}S^{1}$ if and only if $\gamma$ is a solution to the differential equation:
\[ \frac{d\gamma}{d\theta} = \left[\gamma(\theta),\frac{\lambda}{m}\right] + \gamma(\theta)\gamma'(0) \]
Compactness of $G$ (and therefore, of $L_\beta$, since it is a closed subgroup) and the Picard-Lindel\"of theorem allow us to identify the loops in the fixed point set of $T_{\beta}$ with their initial conditions $\gamma'(0) \in \mathfrak{g}$.  We can use equation \ref{equation:loopform} to compute $\gamma'(0)$:
\[ \gamma'(0) = \mathrm{Ad}_{\Lambda(\frac{\psi}{m})g}\left[\frac{\lambda}{m} + \eta'(0)\right] - \frac{\lambda}{m} \]
Any other loop in the same connected component of the fixed point set of $T_{\beta}$ can be obtained by varying $g \in L_\beta$ and $\psi \in [0,2\pi)$.
\end{proof}

Notice that by fixing $\lambda = 0$ in the preceeding discussion, we recover the result that the fixed point set of the loop rotation action consists of the group homomorphisms $S^1 \to G$ \cite{pressleysegal}.

The last result of this section characterizes exactly when two rank one subtori have the same fixed point sets.  

\begin{prop}
\label{prop:fpsequal}
Let $\beta = (\lambda,m)$ and $\beta' = (\lambda', m')$ be generators of rank one subgroups $T_\beta$, $T_{\beta'}$ of $T\times S^1$, and let $L_\beta, L_{\beta'}$ be the Levi subgroups provided by Proposition \ref{prop:BetaFixed}.  Then, $\Omega G^{T_\beta} = \Omega G^{T_{\beta'}}$ if and only if $\lambda/m - \lambda'/m' \in \mathfrak{z}(L_\beta)$
\end{prop}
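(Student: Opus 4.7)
The plan is to characterize each fixed-point set via Propositions~\ref{prop:BetaFixed} and \ref{prop:fps}: a loop $\gamma$ lies in $\Omega G^{T_\beta}$ if and only if it takes values in $L_\beta$ and satisfies $\gamma'(\theta) = [\gamma(\theta), \lambda/m] + \gamma(\theta)\gamma'(0)$, with the analogous statement for $\beta'$. Setting $\tau := \lambda/m - \lambda'/m' \in \mathfrak{t}$, equality of fixed-point sets will be reduced to equality of the Levi subgroups together with agreement of the two ODEs on the common domain of loops.

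For the $(\Leftarrow)$ direction, I will assume $\tau \in \mathfrak{z}(L_\beta)$. Because $\exp(2\pi i \tau)$ lies in $Z(L_\beta)^\circ$ and $\exp(2\pi i \lambda/m) \in Z(L_\beta)$ by construction, their product $\exp(2\pi i \lambda'/m') = \exp(2\pi i \lambda/m)\exp(-2\pi i \tau)$ also lies in $Z(L_\beta)$, which gives $L_\beta \subseteq L_{\beta'}$; upgrading this to equality is the main obstacle, discussed below. Once $L_\beta = L_{\beta'}$ is available, any loop valued in this common Levi satisfies $[\gamma(\theta), \tau] = 0$, hence $[\gamma(\theta), \lambda/m] = [\gamma(\theta), \lambda'/m']$, so the two ODEs coincide on the common loop space. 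This is really the content of Remark~3 after the definition of $\varphi_\beta$: the hypothesis is equivalent to $\varphi_\beta = \varphi_{\beta'}$, so the two semidirect products $L_\beta\rtimes_\beta S^1$ and $L_{\beta'}\rtimes_{\beta'} S^1$ coincide, and Proposition~\ref{prop:BetaFixed} yields the equality of fixed-point sets.

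For the $(\Rightarrow)$ direction, I will pick a generic $\gamma \in \Omega G^{T_\beta}$ in the parameterization of Proposition~\ref{prop:fps}, with free $g \in L_\beta$, $\eta \in X_*(T)$ and $\psi \in [0,2\pi)$. Since $\gamma$ is also $T_{\beta'}$-fixed by hypothesis, it satisfies both the $\beta$-ODE and the $\beta'$-ODE, and subtracting one from the other yields $[\gamma(\theta), \tau] = 0$ identically in $\theta$. As $g$ runs over $L_\beta$ and $\eta$ over a generating family for $X_*(T)$, the initial data $\gamma'(0) = \mathrm{Ad}_{\Lambda(\psi/m)g}[\lambda/m + \eta'(0)] - \lambda/m$ sweep out a subset whose $L_\beta$-adjoint orbits are Zariski-dense in $\mathrm{Lie}(L_\beta)$, forcing $\tau$ to commute with every element of $\mathrm{Lie}(L_\beta)$, i.e., $\tau \in \mathfrak{z}(L_\beta)$.

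The main obstacle I anticipate is promoting the containment $L_\beta \subseteq L_{\beta'}$ to equality inside the $(\Leftarrow)$ argument, since the condition $\tau \in \mathfrak{z}(L_\beta)$ delivers only one inclusion a priori. The cleanest remedy I foresee is to prove $(\Rightarrow)$ first and then apply it with the roles of $\beta$ and $\beta'$ exchanged; this gives $\tau \in \mathfrak{z}(L_{\beta'})$ as well, and the same product trick then yields $L_{\beta'} \subseteq L_\beta$, after which the bracket argument of the preceding paragraph applies symmetrically and the proof closes.
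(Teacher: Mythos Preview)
Your overall strategy matches the paper's: for $(\Rightarrow)$, subtract the two first-order ODEs satisfied by a common fixed loop to force $\mathrm{Ad}_{\gamma(\theta)}\tau = \tau$, then let the initial condition $\gamma'(0)$ range over a large enough family to conclude $\tau \in \mathfrak{z}(L_\beta)$; for $(\Leftarrow)$, reduce to $\varphi_\beta = \varphi_{\beta'}$ and invoke Proposition~\ref{prop:BetaFixed}. Your $(\Rightarrow)$ argument is essentially identical to the paper's (the paper differentiates $\mathrm{Ad}_{\gamma(\theta)}\tau = \tau$ at $\theta=0$ and then shows that rays through the various $\gamma'(0)$ cover $\mathrm{Lie}([L_\beta,L_\beta])$ by choosing $\eta$ so that $\eta'(0)+\lambda/m$ is regular and $\eta'(0)$ is large; your Zariski-density phrasing amounts to the same thing).

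The genuine gap is in your proposed resolution of the $L_\beta = L_{\beta'}$ obstacle. You suggest proving $(\Rightarrow)$ first and then ``applying it with the roles of $\beta$ and $\beta'$ exchanged'' to obtain $\tau \in \mathfrak{z}(L_{\beta'})$. But the hypothesis of $(\Rightarrow)$ is that the two fixed-point sets coincide, and that is precisely the conclusion you are trying to establish in $(\Leftarrow)$; you cannot invoke it there. The argument is circular as written. The paper does not attempt any such bootstrap: it records, in the Remark immediately preceding the proof, that $\tau \in \mathfrak{z}(L_\beta)$ already forces $L_\beta = L_{\beta'}$, directly from the description $L_\beta = Z_G(\exp(2\pi i\lambda/m))$. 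With that in hand, the $(\Leftarrow)$ direction is immediate from Remark~3 (which gives $\varphi_\beta = \varphi_{\beta'}$) and Proposition~\ref{prop:BetaFixed}. Your own $(\Leftarrow)$ paragraph already contains exactly this argument once $L_\beta = L_{\beta'}$ is granted; the fix is to justify that equality on its own terms (as the paper's Remark does) rather than by appeal to $(\Rightarrow)$.
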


\noindent\textit{Remark}: If $\lambda/m - \lambda'/m' \in \mathfrak{z}(L_\beta)$ then $L_\beta = L_{\beta'}$.  This is due to the fact that $L_\beta$ was defined to be the $G$-centralizer of $\exp(2\pi i \lambda/m)$ (and similarly for $L_{\beta'}$).  

\begin{proof}
Suppose that the fixed point sets of $T_\beta$ and $T_{\beta'}$ are equal.  Then for any $\gamma$, we have $(X_\beta)_\gamma = 0$ if and only if $(X_{\beta'})_\gamma = 0$.  These conditions yield two differential equations:
\[ 0 = m\frac{d\gamma}{d\theta} - \gamma(\theta)\gamma'(0) + \lambda \gamma(\theta) - \gamma(\theta)\lambda \]
\[ 0 = m'\frac{d\gamma}{d\theta} - \gamma(\theta)\gamma'(0) + \lambda' \gamma(\theta) - \gamma(\theta)\lambda' \]
We may subtract these, and left translate back to $\mathfrak{g}$ to get the condition:
\[ \forall\,\gamma \in \Omega G^{T_\beta}, \theta \in [0,2\pi), \quad \mathrm{Ad}_{\gamma(\theta)}\left( \frac\lambda m - \frac{\lambda'}{m'} \right) = \frac\lambda m - \frac{\lambda'}{m'} \]
The derivative of this condition at the identity is
\[ \left[\gamma'(0), \frac\lambda m - \frac{\lambda'}{m'} \right] = 0 \]
so the statement is proved if for every element $Y \in \text{Lie}([L_\beta,L_\beta])$, there exists $\gamma \in \Omega G^{T_\beta}$ and $c \in \mathbb{R}$ such that $Y = c \gamma'(0)$.  By Proposition \ref{prop:fps}, we can identify the set of all such $\gamma'(0)$ with a translated adjoint orbit.  This can be achieved by choosing a cocharacter $\eta(\theta)$ such that $\eta'(0) + \frac{\lambda}{m}$ is regular for the $\mathrm{Ad}_{L_\beta}$-action and $\eta'(0)$ is sufficiently large so that the translated adjoint orbit intersects every ray through the origin.

Conversely, if $\lambda/m - \lambda'/m' \in \mathfrak{z}(L_\beta)$ then by the above remark, $L_\beta = L_\beta'$, and furthermore, $\beta$ and $\beta'$ yield identical automorphisms $\varphi_\beta = \varphi_{\beta'}: S^1 \to \mathrm{Aut}(L_\beta)$.  Then by Proposition \ref{prop:BetaFixed} we have $\Omega G^{T_\beta} = \Omega G^{T_{\beta'}}$.  
\end{proof}


\section{An explicit example: The loop space of $SU(2)$}

When $G = SU(2)$, the general theory of the previous section can be understood in a very explicit way.  The way to do this is to translate the condition of being fixed under the group action into a solution of a system of differential equations for the matrix parameters.  Let's work through this derivation.  We can describe an element $\gamma(t) \in \Omega SU(2)$ by:
\[ \gamma(t) = \left( \begin{array}{cc} \alpha(t) & -\beta(t)^{*} \\ \beta(t) & \alpha(t)^{*} \end{array}\right) \]
Subject to the constraints $\vert \alpha(t)\vert^{2} + \vert \beta(t)\vert^{2} = 1$ for all $t \in [0,2\pi]$, $\alpha(0) = 1$, and $\beta(0) = 0$.  One-parameter subgroups correspond bijectively with elements of the Lie algebra of $T\times S^{1}$.  In that spirit, fix some element $(\theta,\psi) \in \mathfrak{t}\oplus \mathbb{R}$, exponentiate to the group, and act on our loop $\gamma(t)$
\begin{eqnarray*}
\left( \left( \begin{array}{cc} e^{i\theta} &0  \\ 0 & e^{-i\theta} \end{array}\right),e^{i\psi}\right)\cdot\gamma(t) &=& e^{i\psi}\cdot\left( \left( \begin{array}{cc} e^{i\theta} &0  \\ 0 & e^{-i\theta} \end{array}\right)\left( \begin{array}{cc} \alpha(t) & -\beta(t)^{*} \\ \beta(t) & \alpha(t)^{*} \end{array}\right)\left( \begin{array}{cc} e^{-i\theta} &0  \\ 0 & e^{i\theta} \end{array}\right)\right) \\
&=& e^{i\psi}\cdot \left( \begin{array}{cc} \alpha(t) & -e^{i2\theta}\beta(t)^{*} \\ e^{-i2\theta}\beta(t) & \alpha(t)^{*} \end{array}\right) \\
&=& \left( \begin{array}{cc} \alpha(t+\psi) & -e^{i2\theta}\beta(t+\psi)^{*} \\ e^{-i2\theta}\beta(t+\psi) & \alpha(t+\psi)^{*} \end{array}\right)\left( \begin{array}{cc} \alpha(\psi)^{*} & e^{i2\theta}\beta(\psi)^{*} \\ -e^{-i2\theta}\beta(\psi) & \alpha(\psi) \end{array}\right) \\
&=& \left( \begin{array}{cc} \alpha(t) & -\beta(t)^{*} \\ \beta(t) & \alpha(t)^{*} \end{array}\right) \quad\text{when $\gamma(t)$ is a fixed loop}
\end{eqnarray*}
so by rearranging slightly
\begin{eqnarray*}
\left( \begin{array}{cc} \alpha(t+\psi) & -e^{i2\theta}\beta(t+\psi)^{*} \\ e^{-i2\theta}\beta(t+\psi) & \alpha(t+\psi)^{*} \end{array}\right) &=& \left( \begin{array}{cc} \alpha(t) & -\beta(t)^{*} \\ \beta(t) & \alpha(t)^{*} \end{array}\right)\left( \begin{array}{cc} \alpha(\psi) & -e^{i2\theta}\beta(\psi)^{*} \\ e^{-i2\theta}\beta(\psi) & \alpha(\psi)^{*} \end{array}\right) \\
&=& \left( \begin{array}{cc} \alpha(t)\alpha(\psi) - e^{-i2\theta}\beta(t)^{*}\beta(\psi) & -\alpha(\psi)^{*}\beta(t)^{*}-e^{i2\theta}\alpha(t)\beta(\psi)^{*} \\ \alpha(\psi)\beta(t)+e^{-i2\theta}\alpha(t)^{*}\beta(\psi) & \alpha(t)^{*}\alpha(\psi)^{*} - e^{i2\theta}\beta(t)\beta(\psi)^{*} \end{array}\right)
\end{eqnarray*}
this yields the finite difference relations:
\[ \alpha(t+\psi) = \alpha(t)\alpha(\psi) - e^{-i2\theta}\beta(t)^{*}\beta(\psi) \]
\[ \beta(t+\psi) = e^{i2\theta}\alpha(\psi)\beta(t)+\alpha(t)^{*}\beta(\psi)\]
We use these infinitesimal form of these relations to get the necessary system of differential equations.  Set $\theta = ns$ and $\psi = ms$ so that we can vary the group element along a fixed one parameter subgroup.  
\begin{eqnarray*}
m \frac{d\alpha}{dt} &=& \lim_{s\to 0} \frac{\alpha(t+ms)-\alpha(t)}{s} \\
&=& \lim_{s\to 0} \frac{\alpha(t)\alpha(ms)-e^{-i2ns}\beta(t)^{*}\beta(ms)-\alpha(t)}{s} \\
&=& \alpha(t) \lim_{s\to 0}\frac{\alpha(ms) - 1}{s} - \beta(t)^{*}\lim_{s\to 0}  \frac{e^{-i2ns}\beta(ms)}{s} \\
&=& m \alpha(t)\alpha'(0) - m\beta(t)^{*}\beta'(0)
\end{eqnarray*}
And similarly for $\beta(t)$,
\begin{eqnarray*}
m \frac{d\beta}{dt} &=& \lim_{s\to 0} \frac{\beta(t+ms)-\beta(t)}{s} \\
&=& \beta(t) \lim_{s\to 0}\frac{e^{i2ns}\alpha(ms) - 1}{s} + m\alpha(t)^{*}\beta'(0) \\
&=& \beta(t)\left[ i2n e^{i2ns}\alpha(ms) + me^{i2ns}\alpha'(ms)\right]\bigg|_{s=0} + m\beta'(0)\alpha(t)^{*} \\
&=& (i2n+m\alpha'(0))\beta(t) + m\beta'(0)\alpha(t)^{*}
\end{eqnarray*}
so the system of differential equations we must solve (for $m\neq 0$, when $m=0$ the problem is trivial) is given by:
\[ \frac{d\alpha}{dt} = \alpha(t)\alpha'(0) - \beta(t)^{*}\beta'(0)\]
\[ \frac{d\beta}{dt} = \beta'(0)\alpha(t)^{*} + \left(i2\frac nm + \alpha'(0)\right)\beta(t)\]
These differential equations are exactly the ones we could have gotten by searching for zeroes of the Hamiltonian vector field corresponding to $(n,m) \in \mathfrak{t}\oplus\mathbb{R}$ (c.f. the differential equation given in Proposition \ref{prop:fps}).  The system we have described depends on four parameters: $n$, $m$, $\alpha'(0)$ and $\beta'(0)$.  Once we fix these parameters, the solutions $\alpha(t)$ and $\beta(t)$ are uniquely determined.  The parameters $n$ and $m$ are fixed from the start, so are only free to vary $\alpha'(0)$ and $\beta'(0)$.  The choices that will turn out to yield periodic solutions will be exactly those loops whose derivatives at the identity are elements of the translated adjoint orbits of Proposition \ref{prop:fps}.

An explicit analytic solution to the system of differential equations can be found by expanding $\alpha(t)$ and $\beta(t)$ in Fourier series.
\[ \alpha(t) = \sum_{k=-\infty}^{\infty} \alpha_{k}e^{-ikt} \]
\[ \beta(t) = \sum_{k=-\infty}^{\infty} \beta_{k}e^{-ikt} \]
Plugging these expressions into the system of differential equations yields a system of algebraic relations for each $k$:
\begin{eqnarray}
0 &=& (\alpha'(0)+ik)\alpha_{k} - \beta'(0)\beta_{-k}^{*} \\
0 &=& \beta'(0)\alpha_{-k}^{*} + \left(i(2\frac nm + k) + \alpha'(0)\right)\beta_{k}
\end{eqnarray}
We can solve by taking $ik-i2\frac nm+\alpha'(0)^{*}$ times the first equation above and substituting into the conjugate of the second equation (replacing $k$ by $-k$).  For each $k$, this yields the expression:
\[ \left(\vert \alpha'(0)\vert^{2} + \vert \beta'(0)\vert^{2} -k^{2} +\frac{2n}{m}\alpha'(0) + \frac{2n}{m}k\right)\alpha_{k} = 0 \]
which implies that either $\alpha_{k} = 0$ or (after completing the square and setting $\alpha'(0) = iA$, which is necessary for $\gamma \in \Omega SU(2)$):
\begin{equation} 
\label{eqn:su2crit}
\left(k-\frac nm\right)^{2} = \left(A +\frac nm\right)^{2} + \vert \beta'(0)\vert^{2}
\end{equation}

The purpose of equation \ref{eqn:su2crit} is to characterize the set of initial conditions for the differential equations above which yield periodic solutions; in other words, equation \ref{eqn:su2crit} exactly identifies to fixed point set of the subtorus generated by $(n,m)$ with a disjoint union of translated adjoint orbits of $SU(2)$, as in Proposition \ref{prop:fps}.  It is evident from equation \ref{eqn:su2crit} that for any loop fixed under the subgroup $(n,m)$ at most two Fourier modes can be non-zero.  These two modes correspond to precisely the values of $k$ that satisfy $k-\frac nm = \pm C$ for some constant $C$, for which we require integer solutions of $k$.  We can get two distinct solutions only if $n+Cm = ml$ and $n- Cm = ml'$, which implies that $C = (l-l')/2$ is a half integer and $n/m = (l+l')/2$ is a half integer.  

We should contextualize this result in the language of Proposition \ref{prop:BetaFixed}.  For $SU(2)$ only two Levi subgroups are possible: the maximal torus $T$, or $SU(2)$ itself.  The former case arises when $n/m \notin \frac12 \mathbb{Z}$, and the latter case arises when $n/m \in \frac12 \mathbb{Z}$.  Stated slightly differently, when $n/m \in P^\vee \subseteq \mathfrak{t}$ is in the coweight lattice of $SU(2)$, then $\exp(2\pi i n/m) \in Z(SU(2))$ and the Levi subgroup corresponding to $(n,m)$ is $G = SU(2)$ (and is the maximal torus otherwise).


\section{Isotropy Representation of $T\times S^1$}

Whenever a group $G$ acts on a manifold $M$ and $x \in M$ is a fixed point of the action, one obtains a representation of $G$ on $T_x M$ by taking the derivative of the action map at $x$.  In this section, we compute this representation on the tangent space at any fixed point of the $T\times S^1$ action on $\Omega G$.  As we are considering the action of torus on a vector space, we present a splitting of the representation in terms of its weight vectors.

\begin{prop}
\label{prop:isotropyrep}
Let $\gamma$ be fixed by $T\times S^{1}$ and suppose that $(t,\psi) \in T\times S^{1}$, then after identifying $T_{\gamma}\Omega G \simeq \Omega\mathfrak{g}$, the isotropy representation of $T\times S^{1}$ on $T_{\gamma}\Omega G$ is given by:
\[ (t,e^{i\psi})_{*}: \Omega \mathfrak{g}\to\Omega\mathfrak{g} \]
\[ X(\theta) \mapsto \mathrm{Ad}_{t\gamma(\psi)}\,X(\theta+\psi) \]
\end{prop}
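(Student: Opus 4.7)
The plan is a direct chain-rule calculation at the level of the defining $T\times S^{1}$-action on $\Omega G$. I would pick a smooth curve $\gamma_s$ through $\gamma$ in $\Omega G$ realising an arbitrary tangent vector $V \in T_\gamma \Omega G$, apply the group element $(t, e^{i\psi})$ to the entire curve, and differentiate at $s = 0$. Combining the commuting $T$- and $S^{1}$-actions from Section 4 gives the composite formula
\[ \bigl((t, e^{i\psi}) \cdot \gamma_s\bigr)(\theta) \;=\; t\,\gamma_s(\theta+\psi)\,\gamma_s(\psi)^{-1}\,t^{-1}, \]
so the action map is a polynomial expression in $\gamma_s$ and $\gamma_s^{-1}$ whose derivative is accessible by the product rule.

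Differentiation in $s$ at $s=0$ is handled by the product rule together with the standard identity $\frac{d}{ds}\big|_{0}\gamma_s(\psi)^{-1} = -\gamma(\psi)^{-1}\,V(\psi)\,\gamma(\psi)^{-1}$; the result is an element of $T_{\gamma(\theta)} G$ depending linearly on $V$. Under the left trivialisation $T_\gamma \Omega G \simeq \Omega \mathfrak{g}$ of Section 4, I write $V(\theta) = \gamma(\theta) X(\theta)$ for $X \in \Omega\mathfrak{g}$, and then translate the derivative back to the Lie algebra by multiplying on the left by $\gamma(\theta)^{-1}$. This reduces the proof to a manipulation of products of the single element $\gamma$ with $t$, $X(\theta+\psi)$, and $X(\psi)$.

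The heart of the argument is the fixed-point identity: $\gamma$ being fixed by $(t, e^{i\psi})$ means $t\gamma(\theta+\psi)\gamma(\psi)^{-1} t^{-1} = \gamma(\theta)$, which I would rearrange to
\[ \gamma(\theta)^{-1}\, t\, \gamma(\theta+\psi) \;=\; t\,\gamma(\psi). \]
Inserting this into the left-translated derivative collapses the $\theta$-dependent products of $\gamma$-factors into conjugation by the $\theta$-independent element $t\gamma(\psi)$, producing the advertised formula $X(\theta) \mapsto \mathrm{Ad}_{t\gamma(\psi)}\, X(\theta+\psi)$. The main obstacle I anticipate is the bookkeeping of $\gamma$-factors arising from differentiating $\gamma_s(\psi)^{-1}$ and checking that the two terms produced by the product rule combine correctly after applying the fixed-point identity; once this is in place one uses $\gamma(0) = e$ together with $X(0) = 0$ to verify that the image genuinely lies in $\Omega\mathfrak{g}$, so that $(t,e^{i\psi})_*$ is a well-defined endomorphism of $\Omega\mathfrak{g}$.
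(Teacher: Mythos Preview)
Your proposal is correct and follows essentially the same route as the paper's proof. Both arguments differentiate the curve $s\mapsto (t,e^{i\psi})\cdot\gamma_s$ at $s=0$, left-translate by $\gamma(\theta)^{-1}$, and then invoke the fixed-point identity $\gamma(\theta)^{-1}t\gamma(\theta+\psi)=t\gamma(\psi)$ to collapse the $\theta$-dependent factors into conjugation by $t\gamma(\psi)$; the only cosmetic difference is that the paper embeds $G$ into $U(n)$ and expands $(1+\epsilon X(\psi))^{-1}$ as a geometric series, whereas you use the intrinsic formula $\tfrac{d}{ds}\big|_0\gamma_s(\psi)^{-1}=-\gamma(\psi)^{-1}V(\psi)\gamma(\psi)^{-1}$.

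One small point of bookkeeping: after the fixed-point identity the two product-rule terms give $\mathrm{Ad}_{t\gamma(\psi)}\bigl(X(\theta+\psi)-X(\psi)\bigr)$, not $\mathrm{Ad}_{t\gamma(\psi)}X(\theta+\psi)$ on the nose. The paper disposes of the constant term $-\mathrm{Ad}_{t\gamma(\psi)}X(\psi)$ by working in $\Omega\mathfrak{g}\simeq L\mathfrak{g}/\mathfrak{g}$, where constants are zero. Your closing remark about ``$X(0)=0$'' is really this same observation in disguise (the subtraction forces the result to vanish at $\theta=0$), but it would be cleaner to state explicitly that the $X(\psi)$ term is a constant loop and hence vanishes in the quotient.
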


\begin{proof}
By embedding $G$ in $U(n)$ we may assume that $G$ is a matrix group.  Pick any variation $\delta\gamma \in T_{\gamma}\Omega G$ and write $\delta\gamma(\theta) = \gamma(\theta)X(\theta)$ for some $X\in \Omega\mathfrak{g}$.  We compute the pushforward:
\begin{eqnarray*}
(t,e^{i \psi})_{*}(\delta\gamma) &=& \frac{d}{d\epsilon}\bigg|_{\epsilon = 0}\left[ (t,\psi)\cdot(\gamma(\theta)+\epsilon\gamma(\theta)X(\theta))\right] \\
&=& \frac{d}{d\epsilon}\bigg|_{\epsilon = 0}\left[ t(\gamma(\theta+\psi)+\epsilon\gamma(\theta+\psi)X(\theta+\psi))(1+\epsilon X(\psi))^{-1}\gamma(\psi)^{-1}t^{-1}\right] \\
&=& \frac{d}{d\epsilon}\bigg|_{\epsilon = 0}\left[\sum_{j=0}^{\infty} (-1)^{j}\epsilon^{j}t(\gamma(\theta+\psi)+\epsilon\gamma(\theta+\psi)X(\theta+\psi))X(\psi)^{j}\gamma(\psi)^{-1}t^{-1}\right] \\
&=& t\gamma(\theta+\psi)\left[ X(\theta+\psi)-X(\psi)\right]\gamma(\psi)^{-1}t^{-1}
\end{eqnarray*}
But now since $\gamma$ is fixed under $T\times S^{1}$, we have $\gamma(\theta) = (t,\psi)\cdot\gamma(\theta) = t\gamma(\theta+\psi)\gamma(\psi)^{-1}t^{-1}$ which implies $\gamma(\theta)t\gamma(\psi) = t\gamma(\theta+\psi)$.  Plugging in to the last line of the above yields the desired formula for the isotropy representation, noticing that the constant term is equivalent to zero in the quotient $\Omega\mathfrak{g} \simeq L\mathfrak{g}/\mathfrak{g}$.
\end{proof} 

The proposition above allows us to compute a weight basis for the isotropy representation, along with the corresponding weights.

\begin{thm}
\label{thm:weightsofisotropy}
If $\gamma(\theta) = \exp(\eta\theta) \in \Omega G$ is fixed by $T\times S^{1}$ (i.e. $\eta \in Q^\vee$), the $T\times S^{1}$ action on $T_{\gamma}\Omega G$ decomposes into non-trivial irreducible subrepresentations:
\[ T_{\gamma}\Omega G \simeq \Omega\mathfrak{g} \simeq \bigoplus_{k =1}^{\infty} \left( \bigoplus_{\alpha \in R} V_{\alpha,k} \oplus \bigoplus_{i=1}^n V_{i,k} \right) \]
The weight of $T\times S^1$ on $V_{\alpha,k}$ is:
\[ \lambda^k_\alpha: \text{Lie}(T\times S^1)_{\mathbb{C}} \to \mathbb{C} \]
\[ \lambda^k_\alpha(x_1,x_2) = \alpha(x_1 + \eta x_2) + k x_2 \]
A basis of weight vectors for $V_{\alpha,k}$ is:
\[ X^{(1)}_{\alpha, k} = i\sigma^\alpha_y \cos(k\theta) \pm i\sigma^\alpha_x \sin(k\theta) \]
\[ X^{(2)}_{\alpha,k} =  i\sigma^\alpha_x \cos(k\theta) \mp i\sigma^\alpha_y \sin(k\theta) \] 
where the plus or minus sign is taken depending on whether $\alpha$ is a positive or negative root, respectively.
The weight of $T\times S^1$ on $V_{i,k}$ is:
\[ \lambda^k_i: \text{Lie}(T\times S^1)_{\mathbb{C}} \to \mathbb{C} \]
\[ \lambda^k_i(x_1,x_2) = k x_2 \]
A basis of weight vectors for $V_{i,k}$ is given by:
\[ X_{i,k}^{(1)} = i\sigma^\alpha_{z} \cos(k\theta) \]
\[ X_{i,k}^{(1)} = i\sigma^\alpha_{z} \sin(k\theta) \]
\end{thm}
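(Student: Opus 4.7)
The plan is to start from Proposition \ref{prop:isotropyrep} and specialize to $\gamma(\theta) = \exp(\eta\theta)$, then diagonalize the resulting action by simultaneously using the root space decomposition of $\mathfrak{g}_{\mathbb{C}}$ and a Fourier expansion in the loop variable $\theta$. Concretely, for this $\gamma$ one has $\gamma(\psi) = \exp(\eta\psi)$, so Proposition \ref{prop:isotropyrep} gives
\[
(t, e^{i\psi})_* X(\theta) = \mathrm{Ad}_{t}\,\mathrm{Ad}_{\exp(\eta\psi)}\, X(\theta+\psi).
\]
I would first write a general $X \in \Omega\mathfrak{g}_{\mathbb{C}}$ as a Fourier series in $\theta$ with coefficients in $\mathfrak{g}_{\mathbb{C}} = \mathfrak{t}_{\mathbb{C}} \oplus \bigoplus_{\alpha \in R} \mathfrak{g}_{\alpha}$, then test the action on a pure mode $E_\alpha e^{ik\theta}$ with $E_\alpha \in \mathfrak{g}_\alpha$.

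For such a mode the direct calculation gives
\[
(t, e^{i\psi})_* \bigl( E_\alpha e^{ik\theta} \bigr) = e^{ik\psi}\, e^{\alpha(\log t) + \alpha(\eta)\psi}\, E_\alpha\, e^{ik\theta},
\]
which identifies $E_\alpha e^{ik\theta}$ as a complex weight vector with weight $(x_1, x_2) \mapsto \alpha(x_1 + \eta x_2) + k x_2$ under the identification of $\mathrm{Lie}(T\times S^1)$ used in the statement. Similarly, a mode $H_i e^{ik\theta}$ with $H_i \in \mathfrak{t}_{\mathbb{C}}$ is a complex weight vector with weight $k x_2$, since $T$ acts trivially on $\mathfrak{t}_\mathbb{C}$. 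The constant $k=0$ modes drop out because $\Omega G$ is the quotient $LG/G$, so constant loops in $L\mathfrak{g}$ are quotiented away in $\Omega \mathfrak{g}$; this explains the range $k \geq 1$.

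To obtain the \emph{real} decomposition stated in the theorem, I would pair each complex weight vector with its complex conjugate, observing that $\Omega \mathfrak{g}$ is the fixed locus of the conjugation $X(\theta) \mapsto \overline{X(\theta)}$, which sends $\mathfrak{g}_\alpha \to \mathfrak{g}_{-\alpha}$ and $e^{ik\theta} \to e^{-ik\theta}$. For each pair $(\alpha, k)$ the two complex modes $E_\alpha e^{\pm i k \theta}$ (with appropriate $\pm$ depending on the sign of $\alpha$) span a two-dimensional real $T\times S^1$-invariant subspace $V_{\alpha,k}$, and a real basis is obtained by taking the real and imaginary parts. Writing $E_{\pm\alpha}$ in terms of the $\mathfrak{su}(2)_\alpha$-Pauli matrices $\sigma^\alpha_x, \sigma^\alpha_y$ produces the explicit basis $X^{(1)}_{\alpha,k}, X^{(2)}_{\alpha,k}$ in the theorem, and the sign choice on the $\sin(k\theta)$ term tracks the sign of $\alpha$. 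An analogous (but simpler) argument for the Cartan directions yields the $V_{i,k}$ pieces with weight $k x_2$.

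The main obstacle I anticipate is purely bookkeeping: verifying that the stated real basis vectors $X^{(1)}_{\alpha,k}, X^{(2)}_{\alpha,k}$ transform under $T\times S^1$ as a genuine two-dimensional real irreducible representation with complex weight $\lambda^k_\alpha$, including getting the $\pm$ signs for positive versus negative roots correct. The only substantive check is that the span is $T\times S^1$-invariant and that the induced $2\times 2$ matrix of the torus action has eigenvalues $e^{\pm \lambda^k_\alpha(x_1,x_2)}$; this follows directly from the explicit formulas for $\mathrm{Ad}_t$ and $\mathrm{Ad}_{\exp(\eta\psi)}$ on the $\mathfrak{su}(2)_\alpha$-triple, combined with the addition formulas for $\cos(k(\theta+\psi))$ and $\sin(k(\theta+\psi))$. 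Once this verification is in place, completeness of the decomposition follows from completeness of the Fourier basis together with completeness of the root space decomposition.
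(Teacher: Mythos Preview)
Your proposal is correct and follows essentially the same route as the paper: both start from Proposition \ref{prop:isotropyrep}, specialize to $\gamma(\psi)=\exp(\eta\psi)$, and then verify that the stated vectors $X^{(1)}_{\alpha,k},X^{(2)}_{\alpha,k}$ span a two-dimensional invariant subspace with the claimed weight by expanding $\cos(k(\theta+\psi))$, $\sin(k(\theta+\psi))$ and using the $\mathrm{Ad}$-action on the $\mathfrak{su}(2)_\alpha$-triple. The only difference is presentational: the paper simply writes down the real basis and checks it directly, whereas you first motivate that basis by diagonalizing over $\mathbb{C}$ via $E_\alpha e^{ik\theta}$ and then passing to real and imaginary parts---but the verification you identify as the ``main obstacle'' is exactly the computation the paper carries out.
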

\begin{proof}
We will check that the pair $(X^{(1)}_{\alpha,k},X^{(2)}_{\alpha,k})$ is a weight basis for $V_{\alpha,k}$ with the appropriate weight; the other cases are similar.  Let $t = e^{x_1}$ for $x_1 \in \mathfrak{t}$, let $x_2 \in \text{Lie}(S^1)$, and let $\Theta = \alpha(x_1 + \eta x_2)$.  By Proposition \ref{prop:isotropyrep},
\begin{eqnarray*}
(t,e^{i x_2})_*X^{(1)}_{\alpha,k} &=& \text{Ad}_{t\gamma(x_2)}\left( i\sigma^\alpha_y \cos(k\theta) + i\sigma^\alpha_x \sin(k\theta) \right) \\
&=& i\left( \sigma_y^\alpha \cos\Theta + \sigma_x^\alpha \sin\Theta \right) \cos(k(\theta +x_2)) + i\left( \sigma_x^\alpha \cos\Theta - \sigma_y^\alpha \sin\Theta \right) \sin(k(\theta + x_2)) \\
&=&i\left( \sigma_y^\alpha \cos\Theta + \sigma_x^\alpha \sin\Theta \right)\left( \cos k x_2 \cos k\theta -  \sin kx_2 \sin k\theta\right) \\
&& \quad  +\;i\left( \sigma_x^\alpha \cos\Theta - \sigma_y^\alpha \sin\Theta \right)\left( \sin kx_2 \cos k\theta + \cos kx_2 \sin k\theta\right) \\
&=& i\cos(\Theta + kx_2 )\, \sigma_y^\alpha \cos k \theta + i\sin(\Theta + kx_2 )\, \sigma_x^\alpha \cos k \theta \\
&&\quad - i\sin(\Theta + kx_2 )\, \sigma_y^\alpha \sin k \theta+ i\cos(\Theta + kx_2 )\, \sigma_x^\alpha \sin k \theta \\
&=& \cos(\Theta+kx_2) X_{\alpha,k}^{(1)} + \sin(\Theta+kx_2) X_{\alpha,k}^{(2)}
\end{eqnarray*} 
The computation for $X_{\alpha,k}^{(2)}$ is identical.  This completes the proof.
\end{proof}



\section{An application of the hyperfunction fixed point localization formula to $\Omega SU(2)$}

In this section we will present our approach to computing a regularized Duistermaat-Heckman distribution on $\text{Lie}(T\times S^1)^*$ coming from the Hamiltonian action of $T\times S^1$ on $\Omega G$.  We will specialize to the case that $G = SU(2)$.  This problem (and the work herein) was originally motivated by Atiyah's approach to a similar problem \cite{atiyah1985circular}.  In that paper, Atiyah showed that the Atiyah-Singer index theorem is a consequence of applying the Duistermaat-Heckman localization formula to the loop space of a Riemannian manifold.  In \cite{atiyah1985circular}, Atiyah does also mention that similar methods can be applied to study $\Omega G$, however, no further details or specific theorems are provided.  Our original aim was to provide these details, as well as to study Duistermaat-Heckman distributions which come from Hamiltonian actions of compact tori on infinite dimensional manifolds.

It was discovered after completing this project that some of these issues had already been considered \cite{picken1989propagator}.  In this paper, Picken shows that the propagator for a quantum mechanical free particle moving on $G$ (with the invariant Riemannian metric coming from the Killing form) can be exactly expressed by applying the fixed point localization formula for $\Omega G$.  In this case, the ill defined left hand side of the localization formula for $\Omega G$ is expressed as a path integral on $G$, while the right hand of the localization formula tells us exactly how to express the result of this path integral in terms of solutions to the classical equations of motion.  We should highlight where our approach differs from his:  

\begin{enumerate}
	\item Throughout, Picken uses a variable $\varphi$ as a coordinate on $\mathfrak{t}$.  We will be calling this coordinate $x_1$ in our work.
	\item Picken is implicitly setting $x_{2} = 1$ throughout (i.e. he considers the slice $\mathfrak{t}\times\left\{1\right\} \subseteq \text{Lie}(T\times S^1)$.  This is evident in his choice of action functional, where the kinetic energy term:
	\[ I_{k}[g] = \int \langle g^{-1}\dot{g}, g^{-1}\dot{g}\rangle\,d\theta \]
	appears without a mass coefficient.  
	\item We will directly apply a fixed point localization formula to $\Omega G$ with its $T\times S^1$ action, and interpret the result as a \textit{hyperfunction} on $\text{Lie}(T\times S^1)$.  The advantage to this approach is that we will be able to Fourier transform this hyperfunction to obtain a closed form of a density function for what one should expect is the pushforward of the ``Liouville measure'' from $\Omega G$ to $\text{Lie}(T\times S^1)^*$ using the moment map.  Picken's formula is limited in this regard, since he does not use the localization formula to obtain a distribution on $\text{Lie}(T\times S^1)$ - he only obtains its restriction to a slice through $E = 1$.  He also makes no use of hyperfunctions in his paper.
\end{enumerate}

\begin{define}
Let $\gamma \in \Omega G^{T\times S^1}$.  The regularized equivariant Euler class of the normal bundle to $\gamma$ is defined to be the holomorphic function on $\text{Lie}(T\times S^1)_{\mathbb{C}}$ given by:
\[ e_{\gamma}^{T\times S^1}(z_1,z_2) = \prod_{k=1}^{\infty} \left( \prod_{\alpha \in \Delta} \frac{\lambda_\alpha^k(z_1,z_2)}{k z_2} \right) \]
\end{define}

The difference between the ``usual'' and the regularized equivariant Euler class of the normal bundle to $\gamma$ is that we divide out by $k z_2$ on each weight.  The regularization can be justified in a number of ways.  We will see shortly that when we include the regularizing terms, the resulting infinite product will converge to a useful functional expression for $e^{T\times S^1}_\gamma$.  Without the regularization, the infinite product does not converge.  Picken's work provides another justification for the regularization, since the resulting regularized localization formula provides an exact determination of the quantum mechanical propagator for a free particle moving on $G$.  

For simplicity, let's examine the example $G = SU(2)$.  We always use coordinates on $\text{Lie}(T\times S^1)$ consisting of the coroot basis for $\mathfrak{t}$, and normalize the $E$-component of the moment map so that:
\[ E\left( \begin{array}{cc} e^{i\theta} & 0 \\ 0 & e^{-i\theta} \end{array}\right) = 1/2 \]
Let $z = (z_1, z_2) \in \mathrm{Lie}(T\times S^{1})$ and let $\gamma(\theta) = \exp(i\eta\theta) \in \Omega SU(2)$ be a fixed point of the $T\times S^{1}$ action.  When we work with $G = SU(2)$ a choice of $\eta$ is really just a choice of integer, so for $\alpha \in \Delta$ the non-zero positive root we set $\alpha(\eta) = 2n$.  For every $k$ we get four weights for the isotropy representation, corresponding to the two root vectors in $\mathfrak{sl}_{2}$ and a the two weights cominig from a non-zero element of the Cartan subalgebra:
\begin{align*}
\lambda_{h,i}^{(k)}(z_1,z_2) &= kz_2\quad i =1,2 \\
\lambda_e^{(k)}(z_1,z_2)  &= kz_2 + 2(z_2 n+z_1) \\
\lambda_f^{(k)} (z_1,z_2) &= kz_2 - 2(z_2 n+ z_1) 
\end{align*}

\begin{prop}
Let $G = SU(2)$.  If $\gamma_n \in \Omega G^{T\times S^1}$, then the regularized equivariant Euler class of the normal bundle to $\gamma_n$ is given by:
\begin{equation}
e_{\gamma_n}^{T}(z_{1},z_{2}) = \frac{\sin\left(2\pi(n+z_1/z_2)\right)}{2\pi(n+z_1/z_2)}
\end{equation}
\end{prop}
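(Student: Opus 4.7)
The plan is to substitute the four weights of the isotropy representation (for $SU(2)$, at each level $k$) into the definition of $e^{T\times S^1}_{\gamma_n}$ and then recognize the resulting infinite product as the Weierstrass product for the sine function. Write $w = n + z_1/z_2$ for brevity.

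First, I would record the key simplification at each level. Dividing each of the four weights $\lambda_{h,1}^{(k)}, \lambda_{h,2}^{(k)}, \lambda_e^{(k)}, \lambda_f^{(k)}$ by $kz_2$, the Cartan weights $\lambda_{h,i}^{(k)}/(kz_2) = 1$ contribute trivially, while the root weights give
\[ \frac{\lambda_e^{(k)}}{kz_2}\cdot\frac{\lambda_f^{(k)}}{kz_2} = \left(1 + \frac{2w}{k}\right)\left(1 - \frac{2w}{k}\right) = 1 - \frac{(2w)^2}{k^2}. \]
So the $k$-th factor of the regularized product collapses to $1 - (2w)^2/k^2$.

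Next, I would note that this infinite product converges (uniformly on compact subsets of the complement of the zero set) because the general term is $1 - O(1/k^2)$, so the product is holomorphic on its domain of definition. Applying the classical Euler/Weierstrass product
\[ \frac{\sin(\pi \zeta)}{\pi \zeta} = \prod_{k=1}^\infty\left(1 - \frac{\zeta^2}{k^2}\right) \]
with $\zeta = 2w$ then yields
\[ \prod_{k=1}^\infty\left(1 - \frac{(2w)^2}{k^2}\right) = \frac{\sin(2\pi w)}{2\pi w} = \frac{\sin(2\pi(n+z_1/z_2))}{2\pi(n+z_1/z_2)}, \]
which is exactly the stated formula.

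There is no real obstacle in this argument; the only delicate point is justifying that the \emph{regularizing} denominators $kz_2$ are what make the product converge (the unregularized product over $\lambda$ alone would diverge because the leading behaviour of each weight is $kz_2$), and that the product can indeed be grouped across the four weights at fixed $k$ before taking the limit. Both of these are routine consequences of absolute convergence of $\sum_k (2w/k)^2$ on compacta, which Lemma~\ref{lemma:infiniteproduct} lets us invoke in the hyperfunction setting as well.
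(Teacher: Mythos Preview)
Your proposal is correct and essentially identical to the paper's own proof: both divide the weights by $kz_2$, collapse the two root-weight factors into the difference of squares $1-(2w)^2/k^2$, and then invoke the Euler product for $\sin(\pi\zeta)/(\pi\zeta)$ with $\zeta=2w$. Your explicit remark that the Cartan weights $\lambda_{h,i}^{(k)}/(kz_2)=1$ contribute trivially is a harmless addition (the paper's definition takes the product only over $\alpha\in\Delta$, i.e.\ the roots, so these factors never appear), and your brief convergence discussion is slightly more explicit than the paper's, but the argument is the same.
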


\begin{proof}
Since the fixed points of the $T\times S^1$ action are isolated we have that the normal bundle to the fixed point set is simply $T_{\gamma_n}\Omega G$.  We can compute the regularized equivariant Euler class of $T_{\gamma_n}\Omega G$ by taking the product over the weights appearing in the isotropy representation of $T\times S^1$ on $T_{\gamma_n}\Omega G$, according to Theorem \ref{thm:weightsofisotropy}:

\begin{align*}
e_{\gamma_n}^{T}(z_{1},z_{2}) &= \prod_{k = 1}^{\infty} \prod_{\alpha \in \Delta} \lambda_{\alpha}^{(k)}/kz_2 \\
&= \prod_{k=1}^{\infty} \left[1 + \frac{2(z_2n+z_1)}{z_2 k}\right]\left[1 - \frac{2(z_2\eta+z_1)}{z_2 k}\right] \\
&= \prod_{k=1}^{\infty} \left[ 1 - \left(\frac{2(z_2 n + z_1)}{z_2 k}\right)^2\right]  \\
&= \frac{\sin\left(2\pi(n+z_1/z_2)\right)}{2\pi(n+z_1/z_2)} 
\end{align*}
where the last line follows from the infinite product formula for $\sin(z)$.
\end{proof}

\noindent\textit{Remark}: In the more general case of $G = SU(n)$, each choice of positive root will give a difference of squares, which then translates to an extra $\sin(z)/z$ term in the final result.  We would then take a product over all the positive roots.

\vspace{0.2cm}

In what follows we will write $e_{\gamma_n}^{T\times S^1}(z_1,z_2) = e_n(z_1,z_2)$ for notational simplicity.  A formal application of the fixed point localization formula to $\Omega G$ would then yield the following expression, valid for $(x_1,x_2) \in \text{Lie}(T\times S^1)$ such that $e_n(x_1,x_2) \neq 0$:
\begin{equation}
\label{eqn:oscillatory}
 \int_{\Omega G} e^{\omega + i\langle\mu(\gamma),x\rangle} = \sum_{n \in \mathbb{Z}} e^{i(nx_1 + \frac{n^2}{2}x_2)} \frac{2\pi(n+x_1/x_2)}{\sin\left(2\pi(n+x_1/x_2)\right)}
 \end{equation}
We have not addressed what types of objects that equation \ref{eqn:oscillatory} asserts an equality of.  In the setting of a compact symplectic manifold with a Hamiltonian action of a compact torus, one is free to understand this to be an equality of distributions, and even an equality of density functions on some open set.  But for the purposes of $\Omega G$, this perspective is insufficient.  For instance, the Duistermaat-Heckman ``distribution'' is supposed to be obtained by taking the Fourier transform of the right hand side of equation \ref{eqn:oscillatory}, however, we can see that the expression obtained from the localization formula is not even integrable since it has poles, and even if we ignore the poles coming from the denominator, the numerator grows linearly in the $\xi_1$ variable.  The terms appearing in the localization formula for $\Omega G$ also have unpleasant limiting behaviour as $x_2 \to 0$.  The right hand side of the localization formula should not be interpreted as a distribution (and consequently, neither should the left hand side).  

There are further hints in \cite{guillemin1988kostant} which suggest that the localization formula for $\Omega G$ should be an expression positing an equality of two hyperfunctions.  Suppose for a moment that we are considering a Hamiltonian action of a torus $T$ on a finite dimensional vector space with weights $\alpha_1,\dots,\alpha_n$.  To each weight we can associate a constant coefficient differential operator $D_{\alpha_i}$ on $\mathfrak{t}^*$.  The Duistermaat-Heckman distribution is a solution to the differential equation:
\[ D_{\alpha_1}\dots D_{\alpha_n} (DH(x)) = \delta(x) \]
When $V$ is infinite dimensional and we have infinitely many weights (such as is the case for the isotropy representation of $T\times S^1$ on the tangent space to a fixed loop in $\Omega G$), then we are forced to consider differential operators of infinite order.  Infinite order differential operators do not even act on distributions.  For example, any infinite order differential operator on $\mathbb{R}$ cannot act on the Dirac delta distribution because of the classical theorem which states that any distribution supported at the origin must be a finite sum of the Dirac delta distribution and its derivatives.  Hyperfunctions (and the related concept of a microfunction) are a sheaf on which infinite order differential operators do have a well defined action.  Furthermore, the entire classical theory of distributions is subsumed by the theory of hyperfunctions, so it makes more sense to study the Duistermaat-Heckman distribution as a hyperfunction, rather than as a distribution.


We now begin our construction of the Picken hyperfunction of $\Omega SU(2)$.  Fix a polarizing vector of the form $\xi = (\delta, \delta') \in \text{Lie}(T\times S^1)$, with $\delta' > 2\delta > 0$.  For the chosen polarization, we must determine the structure of the polarized weights of the isotropy representation at each fixed point.  Recall for $p \in \Omega G^{T\times S^1}$, we defined a cone $\gamma_p$ as the intersection of the positive half spaces coming from the polarized weights.  We now let $p_n$ denote the $n$'th fixed point of the $T\times S^1$ action on $\Omega SU(2)$.  

\begin{prop}
\begin{enumerate}
\item If $n > 0$, then the weights of the isotropy representation at the $n$'th fixed point satisfy the following inequalities:
\begin{align*}
& \lambda^{(k)}_\alpha(\xi) > 0, \quad \alpha = +2, k \geq 1  \\
&  \lambda^{(k)}_\alpha(\xi) > 0, \quad \alpha = -2, k > 2n \\  
&  \lambda^{(k)}_\alpha(\xi) < 0, \quad \alpha = -2, k \leq 2n \\
\end{align*}
\item If $n < 0$, then the weights of the isotropy representation at the $n$'th fixed point satisfy the following inequalities:
\begin{align*}
& \lambda^{(k)}_\alpha(\xi) > 0, \quad \alpha = -2, k \geq 1  \\
&  \lambda^{(k)}_\alpha(\xi) > 0, \quad \alpha = +2, k \geq 2n \\  
&  \lambda^{(k)}_\alpha(\xi) < 0, \quad \alpha = +2, k  < 2n \\
\end{align*}
\item If $n = 0$, then the weights of the isotropy representation at $p_0$ satisfy the following inequalities:
\[ \lambda^{(k)}_\alpha (\xi) > 0, \quad \text{for all $\alpha = \pm 2$, $k \geq 1$} \]
\end{enumerate}
Consequently,
\begin{align*}
\gamma_{p_0} &= \left\{ (y_1,y_2) \in i\text{Lie}(T\times S^1)\,\vert\, \vert y_1 \vert < y_2/2 \right\} \\
\gamma_{p_n} &= \left\{ (y_1,y_2) \in i\text{Lie}(T\times S^1)\,\vert\, \vert y_1 \vert < y_2/2, y_1 > 0 \right\}\quad n\neq 0
\end{align*}
\end{prop}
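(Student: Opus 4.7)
The strategy is to evaluate each of the weights listed in Theorem \ref{thm:weightsofisotropy} (specialized to $SU(2)$, so $\lambda^{(k)}_{h,i}(z_1,z_2)=kz_2$ and $\lambda^{(k)}_{\pm 2}(z_1,z_2)=kz_2 \pm 2(z_2 n+z_1)$) at the polarizing vector $\xi=(\delta,\delta')$, read off the signs, and then intersect the resulting open half-spaces. First I would record
\[ \lambda^{(k)}_{h,i}(\xi)=k\delta'>0, \qquad \lambda^{(k)}_{+2}(\xi)=(k+2n)\delta'+2\delta, \qquad \lambda^{(k)}_{-2}(\xi)=(k-2n)\delta'-2\delta,\]
and observe that the Cartan weights contribute polarized half-space $\{y_2>0\}$ at every fixed point.

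Next I would carry out a three-case sign analysis, repeatedly using the hypothesis $\delta'>2\delta>0$, which says $0<2\delta/\delta'<1$, so that the sign of an expression $N\delta'-2\delta$ (respectively $N\delta'+2\delta$) with $N\in\mathbb{Z}$ is controlled by whether $N\geq 1$ or $N\leq 0$ (respectively $N\geq 0$ or $N\leq -1$). In the case $n>0$ this instantly gives $\lambda^{(k)}_{+2}(\xi)>0$ for all $k\geq 1$, $\lambda^{(k)}_{-2}(\xi)>0$ exactly when $k-2n\geq 1$, and $\lambda^{(k)}_{-2}(\xi)<0$ when $k-2n\leq 0$. The case $n<0$ is symmetric with the roles of $\lambda^{(k)}_{+2}$ and $\lambda^{(k)}_{-2}$ exchanged, and the case $n=0$ makes every weight positive on $\xi$.

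With the polarized weights in hand, the cone $\gamma_{p_n}$ is the intersection of the half-spaces cut out by $\tilde\lambda^{(k)}(y)>0$. To avoid grinding through infinitely many inequalities, I would note that the family $\{(k+2n)y_2+2y_1>0\}_{k\geq 1}$ is ordered under the constraint $y_2>0$: only the smallest admissible $k$ is binding, and similarly for the $\lambda^{(k)}_{-2}$ family after polarization. Concretely, for $n=0$ the binding constraints are the two $k=1$ inequalities $y_2\pm 2y_1>0$, which assemble to $|y_1|<y_2/2$. For $n>0$ the inequalities coming from $\lambda_{+2}^{(k)}$ are all slack given $y_2>0$, the inequalities coming from $\lambda_{-2}^{(k)}$ with $k>2n$ bind at $k=2n+1$ and yield $y_1<y_2/2$, and the polarized (negated) inequalities for $k\leq 2n$ bind at $k=2n$, giving $y_1>0$. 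This produces the claimed description $\gamma_{p_n}=\{|y_1|<y_2/2,\ y_1>0\}$, and the $n<0$ case follows by the same analysis with signs reversed.

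The computation is essentially bookkeeping; the one point that requires care, and is the only mild obstacle, is making sure the quantitative hypothesis $\delta'>2\delta$ is used to dismiss all the non-binding constraints in each infinite family. Because $2\delta/\delta'$ lies strictly between $0$ and $1$, a single unit increment in $k$ always overwhelms the $\pm 2\delta$ term, so the sign of each $\lambda^{(k)}_{\pm 2}(\xi)$ is determined by an integer inequality and the intersection over $k$ collapses to the handful of binding constraints identified above.
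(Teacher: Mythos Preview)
Your proposal is correct and follows essentially the same line as the paper: evaluate $\lambda^{(k)}_{\pm 2}(\xi)$, use $0<2\delta/\delta'<1$ to reduce each sign question to an integer inequality in $k$, and then compute $\gamma_{p_n}$ by isolating the binding half-space in each infinite family. The only stylistic difference is in the cone computation for $n\neq 0$: the paper argues by comparing the set of polarized weights at $p_n$ to the set at $p_0$ and observing that one acquires the single extra weight $(y_1,y_2)\mapsto 2y_1$, whereas you identify the binding indices $k=2n$ and $k=2n+1$ directly; both routes land on the same pair of inequalities $y_1>0$ and $y_1<y_2/2$.
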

\noindent\textit{Remark}: Since the cones $\gamma_{p_n}$ are independent of $n$ (so long as $n \neq 0$), after the proof of this proposition will will simply denote $\gamma_{\neq 0} := \gamma_{p_n}$ and $\gamma_0 := \gamma_{p_0}$
\begin{proof}
We shall prove the result for 1, as 2 and 3 are similar.  For the root $\alpha = +2$, we have that
\[ \lambda^{(k)}_\alpha(\xi) = k\delta' + 2(n\delta' + \delta) \]
This is a positive number, being a sum of positive numbers.  For the root $\alpha = -2$, we are interested in finding the values $k \geq 1$ such that:
\[ k\delta' - 2(n\delta' + \delta) < 0 \]
Dividing both sides by the positive number $\delta'$ yields
\[k - 2n - \frac{2\delta}{\delta'} < 0 \]
By our choice of polarization we have $0 < 2\delta/\delta' < 1$, so the above inequality is true exactly when $1 \leq k \leq 2n$, which proves the first claim.  

For a root $\alpha = \pm 2$ and $k \geq 1$, we denote $H_{k,\pm}^{(n)} = \displaystyle{ \left\{(y_1,y_2) \in i \text{Lie}(T\times S^1)\,\vert\, k y_2 \pm 2( ny_2 + y_1) > 0\right\} }$, which is the positive half plane corresponding to the weight $\lambda^{(k)}_{\alpha}$ at the $n$'th fixed point. 

We now consider the second set of claims about the cones $\gamma_{p_n}$ and $\gamma_{p_0}$.  First, consider the case where $n = 0$.  By part (3) of the previous work towards this proposition, we can see that the weights of the isotropy representation at the $n=0$ fixed point are already polarized.  Letting $\eta_k = H_{k,+} \cap H_{k,-}$, we have by definition that $\gamma_{p_0} = \bigcap_{k \geq 1} \eta_k$.  Notice that $k \geq k'$ implies that $\eta_k \supseteq \eta_{k'}$, and so $\gamma_{p_0} = \eta_1$.  But now the proof is complete, since
\[ \gamma_{p_0} = \eta_1 = \left\{ (y_1,y_2)\,\vert\, y_2 + 2y_1 > 0\right\} \cap \left\{ (y_1,y_2)\,\vert\, y_2 - 2y_1 > 0\right\} = \left\{ (y_1,y_2)\,\vert\, y_2 > 2 \vert y_1 \vert \right\} \]

The case where $n \neq 0$ is similar; the only modification required is that the set of polarized weights of the isotropy representation at the $n$'th fixed point is equal to the set of weights at the $n=0$ fixed point, with one extra weight of the form $(y_1,y_2) \mapsto 2y_1$.    
\end{proof}

Another consequence of the previous proposition is that
\[ (-1)^{p_n} = \left\{ \begin{array}{cc} 1 & n = 0 \\ 1 & n > 0 \\  -1 & n < 0 \end{array}\right. \]

We now have all the necessary pieces to construct the Picken hyperfunction for the $T\times S^1$ action on $\Omega SU(2)$, which we expect is a hyperfunction replacement for the sum over the fixed points appearing in the Duistermaat-Heckman localization formula.  

As before, we let $\tilde{\lambda}_{k,\alpha}$ denote the polarized weights of the isotropy representation at the $n$'th fixed point; we leave the $n$ implicit to avoid notational clutter.  For every $n$, we apply Lemma \ref{lemma:infiniteproduct} to the set of hyperfunctions $\left\{f^{(n)}_{\tilde{\lambda}_{k,\alpha}}(x)\right\}_{k=1}^{\infty}$ (c.f. notation of Corollary \ref{corollary:hypereuler}, making sure to use the regularized weights to guarantee uniform convergence of the infinite product.  The resulting hyperfunction is the regularized equivariant Euler class to the normal bundle of the $n$'th fixed point:

\[ \frac{1}{e_n(x_1,x_2)} = b_{\gamma_{p_n}} \left( \frac{2\pi(n+z_1/z_2)}{\sin(2\pi z_1/z_2)} \right) \]

Putting all of these results together we obtain the Picken hyperfunction for the Hamiltonian $T\times S^1$ action on $\Omega SU(2)$:
\begin{align*}
L_{\Omega SU(2)}(x_1,x_2) &= \frac{1}{(2\pi i)^2} b_{\gamma_{\neq 0} } \left(  \sum_{ n > 0 } e^{iz_1 n + iz_2 n^2/2}\frac{2\pi(n+z_1/z_2)}{\sin(2\pi z_1/z_2)} - \sum_{ n < 0 } e^{iz_1 n + iz_2 n^2/2}\frac{2\pi(n+z_1/z_2)}{\sin(2\pi z_1/z_2)} \right)  \\
&\qquad + \frac{1}{(2\pi i)^2} b_{\gamma_0} \left( \frac{2\pi z_1/z_2 }{\sin(2\pi z_1/z_2)} \right)
\end{align*}
Ultimately, we would like to be able to take a Fourier transform of the Picken hyperfunction in order to obtain the Duistermaat-Heckman hyperfunction.  The following proposition guarantees that the Picken hyperfunction of $\Omega SU(2)$ is in the class of hyperfunctions which have Fourier transforms, and so guarantees that we can find some hyperfunction analogue of the Duistermaat-Heckman distribution in this infinite dimensional example.  We will do this term by term.

\begin{prop}
\label{prop:slowlyincreasing}
For every $n$,
\[ I_n(z_1,z_2) = \frac{n + z_1/z_2}{\sin(2\pi z_1/z_2)} \]
is a slowly increasing holomorphic function on $\mathbb{R}^2\times i\gamma_{p_n}\subseteq \text{Lie}(T\times S^1)_{\mathbb{C}}$.  
\end{prop}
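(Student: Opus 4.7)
The plan is to establish a polynomial growth estimate of the form $|I_n(z_1,z_2)| \leq C_K(1+|\mathrm{Re}(z)|)^2$ on each set $\mathbb{R}^2 \times iK$, where $K \subset \gamma_{p_n}$ is compact; since polynomials are dominated by $C_\varepsilon \exp(\varepsilon |\mathrm{Re}(z)|)$ for every $\varepsilon>0$, this immediately yields the slowly increasing condition. Writing $z_j = x_j + iy_j$, set $w := z_1/z_2$. By the explicit description of $\gamma_{p_n}$ in the previous proposition, compactness of $K$ yields uniform constants $0 < c_K \leq y_2 \leq C_K$ on $K$, and moreover the ratio $y_1/y_2$ is confined to a closed subinterval of $(0,1/2)$ when $n>0$ (with evident analogs for $n<0$ and $n=0$). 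In particular $|z_2| \geq c_K > 0$ everywhere, so $w$ is well defined on the domain.

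The first step is to verify holomorphy. The only candidates for singularities are zeros of $\sin(2\pi w)$, occurring at $w = k/2$ for $k\in\mathbb{Z}$, i.e.\ at points where $z_1 = (k/2)z_2$. Because $y_1/y_2$ lies in a compact set disjoint from $\tfrac{1}{2}\mathbb{Z}$, with the sole exception of $y_1/y_2 = 0$ in the case $n=0$, one obtains the uniform bound
\[ |z_1 - (k/2) z_2| \geq |y_1 - (k/2) y_2| \geq D_K > 0 \]
for every relevant integer $k$. In the single exceptional case ($n=0$, $k=0$), the numerator $n+w = w$ vanishes to the same order as $\sin(2\pi w)$, so the apparent singularity at $w=0$ is removable, and $I_n$ extends holomorphically across it.

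The main analytic content is the lower bound $|\sin(2\pi w)| \geq c/|z_2|$. Writing $w = u+iv$ and using the identity $|\sin(2\pi w)|^2 = \sin^2(2\pi u) + \sinh^2(2\pi v)$, I would split into two cases. If $|v| \geq 1/4$, then $|\sin(2\pi w)| \geq |\sinh(2\pi v)|$ is already bounded below by a positive absolute constant. Otherwise $|v| < 1/4$; letting $k_0\in\mathbb{Z}$ minimize $|u - k_0/2|$ gives $|u-k_0/2| \leq 1/4$, hence $|w - k_0/2| \leq 1/(2\sqrt{2}) < 1/2$, and in particular the disk of radius $|w-k_0/2|$ around $k_0/2$ contains no other half-integer. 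Using $\sin(2\pi w) = \pm \sin(2\pi(w-k_0/2))$ together with the elementary fact that $\sin(2\pi \zeta)/\zeta$ is holomorphic and nonvanishing on $|\zeta|\leq 1/(2\sqrt{2})$, I conclude $|\sin(2\pi w)| \geq c_0 |w - k_0/2| \geq c_0 D_K/|z_2|$. Combining this with $|z_2| \leq C_K + |x|$ and the immediate numerator estimate $|n+w| \leq |n| + |z_1|/c_K = O(1+|x|)$ gives $|I_n(z_1,z_2)| = O((1+|x|)^2)$, as desired.

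The hard part will be the lower bound on $|\sin(2\pi w)|$ in the regime where $w$ approaches a half-integer and $\mathrm{Im}(w)$ becomes small simultaneously; this is precisely where the geometry of $\gamma_{p_n}$ enters, via the inequality $|z_1-(k/2)z_2| \geq |y_1-(k/2)y_2|$ and the fact that $y_1/y_2$ is uniformly separated from $\tfrac{1}{2}\mathbb{Z}$ on each compact subset of $\gamma_{p_n}$. Once that estimate is in hand, everything else is bookkeeping.
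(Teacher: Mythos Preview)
Your argument is correct and a bit cleaner than the paper's. Both proofs rest on the same geometric input: on a compact $K\subset\gamma_{p_n}$ one has $y_2\geq c_K>0$ and the ratio $y_1/y_2$ stays in a closed subinterval of $(-1/2,1/2)$ bounded away from the half-integers (with the single removable exception $n=0$, $k=0$). The paper exploits this by tracking the image of lines $x_1=mx_2$ under $(z_1,z_2)\mapsto z_1/z_2$ as ellipsoidal arcs, arguing pictorially that the closest approach to a pole of $\csc(2\pi\,\cdot\,)$ on the box $\max(|x_1|,|x_2|)=R$ occurs along $x_1=x_2/2$, and then invoking that simple poles force only linear blow-up. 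You instead give a direct quantitative lower bound on $|\sin(2\pi w)|$ via a two-case split on $|\mathrm{Im}\,w|$, reducing the ``close to a pole'' case to the inequality $|w-k_0/2|\geq |y_1-(k_0/2)y_2|/|z_2|\geq D_K/|z_2|$. This buys you an explicit polynomial bound $|I_n|=O((1+|x|)^2)$ without any appeal to the shape of the image curves, and makes the role of the cone condition completely transparent. One small point worth making explicit in your write-up: in the $n=0$ case the removability at $w=0$ handles not only holomorphy but also the growth estimate, since on the region $|w|\leq 1/(2\sqrt2)$ the function $w/\sin(2\pi w)$ is bounded outright.
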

\begin{proof}
That the function in question is holomorphic on $\mathbb{R}^2 \times i\gamma_{p_n}$ follows from its expression as an infinite product of regularized weights, and that the cones $\gamma_{p_n}$ are constructed to avoid the zero locus of all such weights.  It remains to show that $I_n(z_1,z_2)$ is slowly increasing.

For fixed $(y_1,y_2) \in \gamma_n$, the image of the curves $x_1 = mx_2$ ($m \in \mathbb{R}$) under the mapping $(z_1,z_2) \mapsto z_1/z_2$ are the parametric curves given by:
\[ \mathbb{R} \to \mathbb{C} \]
\[ s \mapsto \frac{ms^2 + y_1y_2}{s^2+y_2^2} + i \frac{sy_1 -msy_2}{s^2 + y_2^2} \]
These are easily seen to be ellipsoidal arcs which cross the real axis at $\text{Re}(z_1/z_2) = y_1/y_2$ when $s=0$, and asymptotically approach the real axis from above (below) at $\text{Re}(z_1/z_2) = m$  as $s \to \infty$ when $m > 0$ (and from below the axis if $m < 0$).  

We assume $n \neq 0$, since the $n = 0$ case is similar.  Fix a compact set $K\subseteq \gamma_{p_n}$ and any $\epsilon > 0$.  Since $(y_1,y_2) \mapsto y_1/y_2$ is continuous on $K$ it will achieve its maximum and minimum, so there is a $\delta >0$ such that the estimate $\delta \leq y_1 / y_2 \leq 1/2 - \delta$ holds uniformly over $K$.  Since the numerator of $I_n(z_1,z_2)$ is slowly increasing (it is a polynomial), it suffices to prove that:
\[ \bigg| \frac{e^{-\epsilon\,\vert \text{Re}(z) \vert}}{\sin(2\pi z_1/z_2)} \bigg| \to 0 \]
uniformly in $K$ as $\text{Re}(z) \to \infty$.  

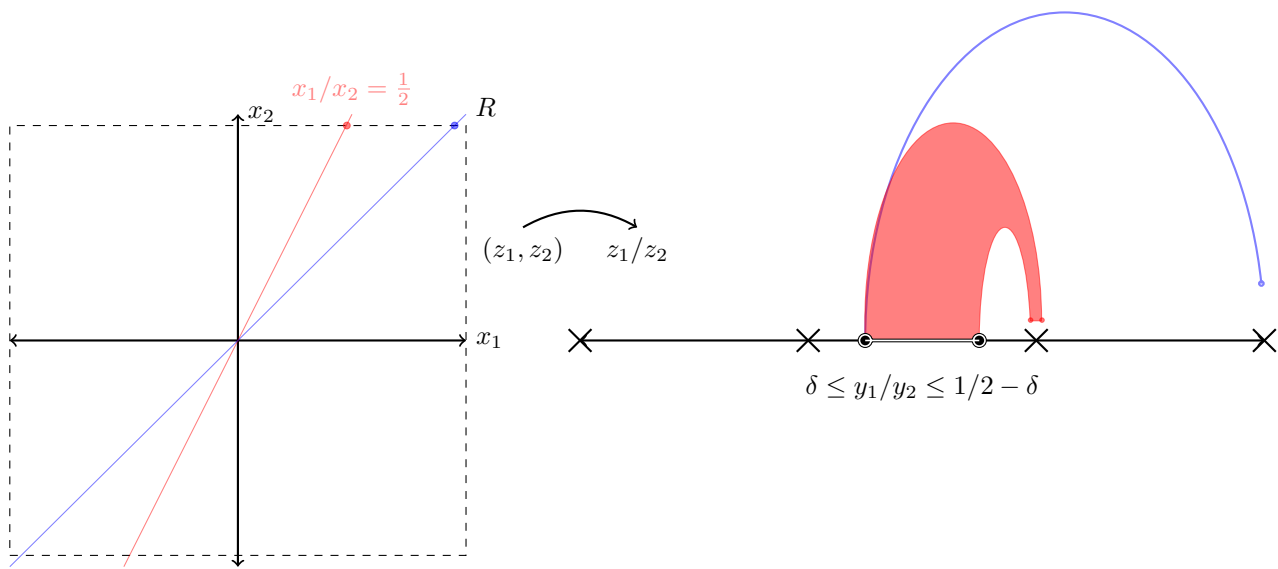
\begin{figure}[h]
\begin{center}
\begin{tikzpicture}[scale = 1.5]
				
				\draw[black,thick,->] (-0.5,1) node[anchor=north]{$(z_1,z_2)$} to[bend left] (0.5,1) node[anchor=north]{$z_1/z_2$};
				
				\draw[black,thick,<->] (-5,0) -- (-1,0) node[anchor=west]{$x_1$};
				\draw[black,thick,<->] (-3,-2) -- (-3,2) node[anchor = west]{$x_2$};
				\draw[dashed,black] (-5,-1.9) rectangle (-1,1.9) node[anchor = south west]{$R$};
				
				\draw[blue,opacity=0.5,fill] (-1.1,1.9) circle[radius=0.03];
				
				\draw[red,opacity=0.5,fill] (-2.045,1.9) circle[radius=0.03];
				
				\draw[red,opacity=0.5] (-4,-2) -- (-2,2) node[anchor = south]{$x_1/x_2 = \frac12$};
				\draw[blue,opacity=0.5] (-5,-2) -- (-1,2) ;

				\draw[black,thick] (3.9,0.1) -- (4.1,-0.1);
				\draw[black,thick] (3.9,-0.1) -- (4.1,0.1);
				
				\draw[black,thick] (5.9,0.1) -- (6.1,-0.1);
				\draw[black,thick] (5.9,-0.1) -- (6.1,0.1);
				
				\draw[black,thick] (-0.1,0.1) -- (0.1,-0.1);
				\draw[black,thick] (-0.1,-0.1) -- (0.1,0.1);
				
				\draw[black,thick] (1.9,0.1) -- (2.1,-0.1);
				\draw[black,thick] (1.9,-0.1) -- (2.1,0.1);
				
				\draw[black,thick,<->] (0,0) -- (6,0) node[anchor=west]{};
				
				\draw[blue,thick,opacity=0.5] (2.5,0) arc(180:10: 1.75 and 2.9) -- ++(0,0) circle[radius=0.02,fill];
				
				 \draw[red,fill,opacity=0.5] (3.5,0) arc (180:10: 0.225 and 1) -- ++(0.1,0) -- ++(0,0) arc (0:185.5: 0.775 and 1.75) -- cycle; 
				 \draw[red,fill,opacity=0.5] (3.95,0.18) circle[radius=0.02];
				 \draw[red,fill,opacity=0.5] (4.05,0.18) circle[radius=0.02];
				 
				 \draw[black,double,fill] (3.5,0) circle[radius =0.05] -- (3,0) node [label={[label distance=0.2cm]below:{$\delta \leq y_1/ y_2 \leq 1/2 - \delta$}}]{} -- (2.5,0) circle[radius=0.05];
\end{tikzpicture}
\caption{Proof that $I_n(z_1,z_2)$ is slowly increasing.  The left side of the figure demonstrates the $(x_1,x_2)$ plane; the right hand side is demonstrating the image of the map $(z_1,z_2) \mapsto z_1/z_2$ when we fix various values of $(y_1,y_2)$.  The red filled region is showing the image of the line $x_1 = x_2/2$ as $(y_1,y_2)$ varies over $K$, with $\max\left\{\vert x_1\vert,\vert x_2\vert\right\} \leq R$.  The blue curve is showing the image of the line $x_1 = x_2$ (fixing $(y_1,y_2)$ such that $y_1/y_2 = \delta$).  The poles of $\csc(2\pi z)$ are demonstrated with $\times$.}
\label{fig:slowlyincreasing}
\end{center}
\end{figure}

First, we notice that if we fix $y_1/y_2$ as above, then for every $R$ sufficiently large we have:
\[ \max\left\{\vert x_1\vert, \vert x_2\vert\right\} = R \Rightarrow \vert \csc(2\pi z_1/z_2) \vert \leq \bigg| \csc\left(2\pi \frac{R^2/2 + y_1y_2}{R^2+y_2^2} + i \frac{Ry_1 -Ry_2/2}{R^2 + y_2^2}\right) \bigg| \]
This estimate follows from the observation that the maximum of $\csc(2\pi z_1/z_2)$ on the box occurs at the point $(x_1,x_2)$ such that the distance from $z_1/z_2$ to a pole of $\csc(2\pi z)$ is minimized; this condition is satisfied on the line $x_1 = x_2/2$.  A uniform bound over $K$ can be found because of our previous estimate on $y_1/y_2$.  Figure \ref{fig:slowlyincreasing} demonstrates these estimates.  The proof is completed by noticing that $\csc(2\pi z_1/z_2)$ has linear growth (which is dominated by any exponential) as $x_2 \to \infty$ because all of its poles are simple. 
\end{proof}

By Proposition \ref{prop:slowlyincreasing}, $L_{\Omega SU(2)}(x_1,x_2)$ is a slowly increasing hyperfunction, so we may take its Fourier transform.  Let $S_n$ be a contour in $\text{Lie}(T\times S^1)_{\mathbb{C}}$ chosen so that $(y_1,y_2) \in \gamma_{p_n}$.   After choosing a holomorphic partition of unity $\chi_\sigma(z)$, we may write the following expression for the Duistermaat-Heckman hyperfunction:

\[ DH(\xi_1,\xi_2) = \frac{1}{(2\pi i)^2} \sum_{\sigma \in \Sigma}  \sum_{n \in \mathbb{Z}} b_{-\sigma^\circ}\left( \int_{S_n} e^{-i(\zeta_1- n)z_1 -i(\zeta_2- n^2/2)z_2}\frac{2\pi(n+z_1/z_2)}{\sin(2\pi z_1/z_2)} \chi_\sigma(z_1,z_2) \,dz_1\,dz_2 \right)\]

One might try and proceed with the computation of this integral, as in the example of section \ref{HyperHam}; however, if one uses the standard holomorphic partition of unity then the computation of the contour integrals by a method of iterated residues becomes very complicated.  The difficulty essentially arises from the fact that the integrand of the resulting multivariable contour integral has a polar locus consisting of triples of lines that intersect.  If one uses the following partition of unity:
\[ 1 = \frac{1}{1+e^{z_1}}\frac{1}{1+e^{\pi z_2}} + \frac{1}{1+e^{-z_1}}\frac{1}{1+e^{\pi z_2}} +\frac{1}{1+e^{z_1}}\frac{1}{1+e^{-\pi z_2}} +\frac{1}{1+e^{-z_1}}\frac{1}{1+e^{-\pi z_2}} \]
then polar locus of the integrand defining the Fourier transform consists of isolated singularities which are locally cut out by a pair of equations.  The residues near such singularities are readily computed, but do not appear to re-sum in any obvious way.  We leave a further examination of the form of the Duistermaat-Heckman hyperfunction of $\Omega SU(2)$ as an open problem.

\bibliographystyle{amsalpha}
\bibliography{hyperfnbib}

\end{document}